\newtheorem{theorem}{Theorem}[section]
\newtheorem{lemma}[theorem]{Lemma}
\newtheorem{corollary}[theorem]{Corollary}
\theoremstyle{definition}
\theoremstyle{remark}
\begin{document}

\title[An entry in GR and KdV]{Collisionless shock region of the KdV equation and an entry in 
Gradshteyn and Ryzhik}

\medskip

%    Information for second author
\author[T. Amdeberhan et al]{Tewodros Amdeberhan}
\address{Department of Mathematics,
Tulane University, New Orleans, LA 70118}
\email{tamdeber@tulane.edu}

%    Information for second author
\author[]{Victor H. Moll}
\address{Department of Mathematics,
Tulane University, New Orleans, LA 70118}
\email{vhm@tulane.edu}

%    Information for second author
\author[]{John Lopez Santander}
\address{Department of Mathematics,
Tulane University, New Orleans, LA 70118}
\email{jlopez12@tulane.edu}

%    Information for second author
\author[]{Ken McLaughlin}
\address{Department of Mathematics,
Tulane University, New Orleans, LA 70118}
\email{kmclaughlin@tulane.edu}

%    Information for second author
\author[]{Christoph Koutschan}
\address{RICAM, Austrian Academy of Sciences}
\email{christoph@koutschan.de}

%    General info
\subjclass[2010]{Primary 33E05, Secondary 35C20, 35P25}

\date{\today}

\keywords{Asymptotics, Korteweg-de Vries, elliptic integrals, automatic proofs}

\maketitle

\newcommand{\ba}{\begin{eqnarray}}
\newcommand{\ea}{\end{eqnarray}}
\newcommand{\ift}{\int_{0}^{\infty}}
\newcommand{\nn}{\nonumber}
\newcommand{\no}{\noindent}
\newcommand{\lf}{\left\lfloor}
\newcommand{\rf}{\right\rfloor}
\newcommand{\realpart}{\mathop{\rm Re}\nolimits}
\newcommand{\imagpart}{\mathop{\rm Im}\nolimits}
\newcommand{\K}{\mathbf{K}}
\newcommand{\J}{\mathbf{J}}
\newcommand{\A}{\mathbf{A}}

\newcommand{\op}[1]{\ensuremath{\operatorname{#1}}}
\newcommand{\pFq}[5]{\ensuremath{{}_{#1}F_{#2} \left( \genfrac{}{}{0pt}{}{#3}
{#4} \bigg| {#5} \right)}}

\newcommand{\pFqcomma}{\mskip\pFqmuskip}

\newtheorem{Definition}{\bf Definition}[section]
\newtheorem{Thm}[Definition]{\bf Theorem}
\newtheorem{Example}[Definition]{\bf Example}
\newtheorem{Lem}[Definition]{\bf Lemma}
\newtheorem{Cor}[Definition]{\bf Corollary}
\newtheorem{Prop}[Definition]{\bf Proposition}
\numberwithin{equation}{section}

\section*{}
\begin{center}
\textit{To the memory of  Hermann Flaschka}
\end{center}

\begin{abstract}
The long-time  behavior of solutions to the initial value problem for the Korteweg-de Vries equation on the whole
 line, with general initial conditions has been described uniformly using five different asymptotic forms. Four of 
 these asymptotic forms were expected:  the \textit{quiescent behavior} (for $|x|$ very large), a \textit{soliton region} (in which the solution behaves as a collection of isolated solitary waves), a \textit{self-similar region}
 (in which the solution is described via a Painlev\'{e} transcendent), and a \textit{similarity region} (where the solution behaves as a simple trigonometric function of the quantities $t$ and $x/t$).   A fifth asymptotic form, 
lying between the self-similar (Painlev\'{e}) and the similarity one, has been described in terms of classical 
elliptic functions. An integral of elliptic type, giving an explicit representation of the phase, has appeared in this 
context. The same integral has appeared in the 
table of integrals by Gradshteyn and Ryzhik.  Our goal here is to confirm the validity of this entry. 
\end{abstract}

\section{Introduction}
\label{sec-introduction}

The table of integrals created by Gradshteyn and Ryzhik \cite{gradshteyn-2015a} contains a large variety of entries where the 
answers are expressed in terms of the complete elliptic integral
\begin{equation*}
\K(k) = \int_{0}^{1} \frac{dx}{\sqrt{(1-x^{2})(1-k^{2}x^{2})}}. 
%\label{form-1.1}
\end{equation*}
\noindent
The goal of this paper is to present  a proof of entry $4.242.4$ in \cite{gradshteyn-2015a}:
\begin{eqnarray}
\label{entry-4.242.4}
I(a,b) & := &  \int_{0}^{b} \frac{\ln x \, dx}{\sqrt{(a^{2}-x^{2})(b^{2}-x^{2})}}  \\ 
& = & 
\frac{1}{2a} \left[ \K \left( \frac{b}{a} \right) \ln(ab) - \frac{\pi}{2} \K \left( \frac{\sqrt{a^{2} - b^{2}}}{a} \right) \right]  \nonumber 
\end{eqnarray}
\noindent 
with $0 < b < a$. Note that the right-hand side equals 
$\frac{1}{2a} \left[ \K(k) \ln(ab) - \frac{\pi}{2} \K(k') \right]$, with modulus $k = b/a$ and complementary modulus $k' = \sqrt{1- k^{2}}$.  

Some integrals of this general type have been considered in \cite{boettner-2011a} as part of a series of 
papers dedicated to establishing all entries in \cite{gradshteyn-2015a} starting with \cite{moll-2007a} and 
currently at \cite{gr-32}.  Entry $4.242.1$ 
\begin{equation*}
%\label{entry-4.242.1}
\int_{0}^{\infty} \frac{\ln x \, dx}{\sqrt{(a^{2}+x^{2})(b^{2}+x^{2})}} = \frac{1}{2a} \K \left( \frac{\sqrt{a^{2}-b^{2}}}{a} \right) \ln(ab)
\end{equation*}
\noindent
has been proved in \cite{boettner-2011a}. The crucial point in the proof comes down to the identity 
\begin{equation}
\label{stefan-1}
\sum_{\ell=0}^{j-1} \frac{a_{\ell}}{j- \ell} = 4 a_{j} \sum_{\ell=0}^{j-1} \frac{1}{2 \ell + 1}
\,\,\, \textnormal{ where } \,\,\, 
a_{\ell} = \frac{ \left( \tfrac{1}{2} \right)_{\ell}^{2}}{\ell!^{2}}.
\end{equation}
\noindent
Two proofs of \eqref{stefan-1} were presented: one involves the manipulation of a balanced ${_{4}F_{3}}$ hypergeometric series and the 
other is an automatic proof based on the techniques developed in \cite{petkovsek-1996a}. It is our aim 
 to proceed in a similar manner to verify \eqref{entry-4.242.4}. 

Section \ref{sec-analytic-1} presents an expression for $I(a,b)$ in terms of the power series 
\begin{equation}
F(x) = \sum_{n=0}^{\infty} (-1)^{n} \binom{2n}{n}^{2} H_{n} x^{n}, 
\label{F-def} 
\end{equation}
\noindent
where $H_{n} = 1 + \tfrac{1}{2} + \tfrac{1}{3} + \cdots + \tfrac{1}{n}$ is the harmonic number.  The series
 in \eqref{F-def} converges for 
$|x| < \tfrac{1}{16}$, it is not a hypergeometric function and it makes its appearance in Lemma \ref{lemma-Jc1}.
%, since the ratios of its coefficients are not rational functions 
% Section \ref{sec-F-incomplete} presents a variety of integral representations of the function $F$. One is these integral representations 
%leads to an equivalent formulation of the value for $I(a,b)$ in \eqref{entry-4.242.4}. This takes the form a representation of $F$ on 
%terms of two complete elliptic in integrals of the first kind. 
%of $n$.
A remarkable fourth order differential equation for $F$, presented in Section \ref{sec-automatic}, is 
then used to supply an automated proof of \eqref{entry-4.242.4}.

\section{Background on the  integral}

%\texttt{Ken: you should write this} \marginpar{\textbf{Ken}}/

The Korteweg-de Vries equation (KdV)
\begin{eqnarray*}
u_{t} - 6 u u_{x} + u_{xxx} = 0 
\end{eqnarray*}
originated in the $19^{th}$ century  as a  description of the evolution of long waves in shallow water such as a canal \cite{boussinesq-1871a,korteweg-1895a}.  Much later (in the late 1960s) an incredible connection was discovered between this equation and the scattering and inverse scattering theory of the 1-dimensional Schr\"{o}dinger operator \cite{gardner-1967b}
\begin{eqnarray}
\label{eq:schro}
 L=-\frac{d^{2}}{dx^{2}} + u  \ .
\end{eqnarray}
For potentials  $u(x)$ that are rapidly decaying as $|x| \to \infty$, the scattering data for $L$ consists of
$(i)$  a reflection coefficient $r(z)$ describing the energy that is reflected back when an incoming (quantum) wave with velocity $z$ interacts with the medium represented by $u$, and $(ii)$ 
 a finite number of eigenvalues and associated normalization constants (each eigenvalue  corresponds to a bound state for the operator $L$).  

The amazing discovery in \cite{gardner-1967b} is that if the potential $u$  in \eqref{eq:schro} evolves according to the KdV equation, so that now $u$ depends on time $u = u(x,t)$ and as does the 
operator $L = L(t)$, this nonlinear evolution has two remarkable properties:
\begin{itemize}
\item The eigenvalues of the operator $L(t)$ are \textit{constant in time} and 
 the associated normalization constants  evolve in a simple manner. 
\item The reflection coefficient $r=r(z,t)$ evolves explicitly in $t$:
\begin{equation*}
r(z,t) = r(z,0) e^{ 8 i z^{3} t }
\end{equation*}
where $r(z,0) = r_{0}(z)$ is the reflection coefficient corresponding to the initial potential $u(x,0) = u_{0}(x)$.
\end{itemize}
\noindent
These remarkable properties followed hard upon the heels of the earlier discovery of Zabusky and Kruskal \cite{zabusky-1965a}. 
Previously these authors had made the observation that the KdV 
equation possesses not only the classical solitary wave solution, but also other special solutions that spread out 
at long times like a collection of separated individual solitary waves (both at large negative as well as large
 positive times), but interact with each other at intermediate times that seem to defy their interpretation as
  individual solitary waves.  Shortly after  Gardner, Green, Kruskal, and Miura made the connection to the 
  Schr\"{o}dinger operator, Lax \cite{lax-1968a} introduced the framework that bears his name. The  pair of operators discovered in 
\cite{gardner-1967b} represented the first example of a \textit{Lax pair,} which brought forth the so-called inverse scattering method for the analysis of certain special nonlinear partial differential equations.

%From these origins there emerged numerous collections of nonlinear partial differential equations and associated Lax pairs of operators.  In each case, the scattering data for one of the operators evolves simply in the
% time variable.  The inverse scattering theory was then exploited, or developed, to yield a solution procedure for these new integrable nonlinear partial differential equations.  As it turns out, the solution procedure is extremely powerful.  Using it, researchers discovered remarkable phenomena in the behavior of these nonlinear equations which turns out to be ubiquitous even outside the class of integrable equations.
 
 From these origins there emerged numerous collections of nonlinear partial differential equations and associated Lax pairs of operators.  Primary amongst these was the discovery of Zakharov and Shabat \cite{ zakharov-1973b} that the nonlinear Schr\"{o}dinger equation 
  falls into the framework, and the discovery independently by Flaschka \cite{ flaschka-1974b, flaschka-1974a} and Manakov \cite{ manakov-1974b, manakov-1974c} that the Toda lattice does, too.   In each case, the scattering data for one of the operators evolves simply in the time variable. The inverse scattering theory was then exploited, or developed, to yield a solution procedure for these new integrable nonlinear partial differential equations. As it turns out, the solution procedure is extremely powerful. Using it, researchers discovered remarkable phenomena in the behavior of these nonlinear equations which turns out to be ubiquitous even outside the class of integrable equations.

A basic example of this type of discovery is the complete understanding of the long-time behavior of solutions to these equations \textit{from general initial conditions}.  This was first established for 
the nonlinear Schr\"{o}dinger equation \cite{zakharov-1976a}.  It was presumed for some time that the calculations for the KdV equation would be entirely similar, but a curious technical obstacle to the direct application of the method reared its head, and led to a new asymptotic phenomenon discovered in the behavior of general solutions of the KdV equation, the so-called \textit{collisionless shock region} for the KdV equation
 \cite{ablowitz-1977b}.

For generic initial conditions, the behavior of the solution of the KdV equation is described uniformly 
using five  different asymptotic forms each occurring in a different spatial region.  Four of these asymptotic forms were expected:  the \textit{quiescent behavior} (for $|x|$ very large), a \textit{soliton region} (in which the solution behaves as a collection of isolated solitary waves), a \textit{self-similar region}
 (in which the solution is described via a Painlev\'{e} transcendent), and a \textit{similarity region} (where the solution behaves as a simple trigonometric function of the quantities $t$ and $x/t$).  

The fifth region, it turns out, lies in between the self-similar (Painlev\'{e}) and the similarity one.  It emerges
 because of the surprisingly benign fact that for generic potentials (i.e.,
  generic initial data for the KdV equation), the reflection coefficient takes on a specific extreme value at $z=0$:  $r(0)=-1$.  The reflection coefficient satisfies $|r(z)| \le 1$, and for all values of $z$ \textit{other than $0$}, this inequality is \textit{strict}, but not so at $z=0$.  In the calculations, $|x|, t \to \infty$ at different rates depending on which asymptotic region one is studying, and 
the quantity $\log{\left( 1 - \left|
r\left( \frac{x}{12t}\right)
\right|^{2} \right) }$  is a key ingredient in the asymptotic description.  The fact that this quantity can diverge, because $r(0) = -1$, is the source of the fifth region.

In this new region, the behavior of the solution is described in terms of a Jacobi cnoidal function 
\cite{whittaker-2020a}.  In explicit form, the asymptotic form of the solution in this region is
\begin{equation}
\label{eq:JacobiEll}
u \sim \left( \frac{ - 2 x }{3 t} \right) \left( a(\alpha) + b(\alpha) \text{cn}^{2}\left( 2 \K(\nu) \theta + \theta_{0} ; \nu(Z) \right) \right) \ ,
\end{equation}
\noindent
where $\text{cn}(\cdot; \nu)$ represents the Jacobi elliptic function \cite{whittaker-2020a}, the quantity 
$\alpha$ is determined to depend on a slow variable $Z$, $\theta$ is a fast variable, and $K(\alpha)$ is the complete elliptic integral of the first kind.  The quantity $\theta_{0}$ is a phase which was undetermined in the original work of Albowitz and Segur.

This work led to foundational questions regarding how to provide a rigorous proof of the asymptotic formulae in \cite{ablowitz-1977b} and, more generally, of  the long-time analysis of integrable nonlinear partial differential 
equations.  Such problems remain outside of the reach of any classical methods.  The Riemann-Hilbert 
machinery, developed by Deift and Zhou \cite{deift-1993c,deift-1992b,deift-1993a,deift-1994d} for integrable problems, was used by 
Deift, Venakides, and Zhou  \cite{deift-1994a} to analyze the collisionless shock region.

In the scattering and inverse scattering theory applied to the modified KdV equation in \cite{deift-1992b}, and to the KdV equation in \cite{deift-1994a}, the solution to the KdV equation was characterized through the solution of a vector-valued Riemann-Hilbert problem.  In order to carry out the long-time analysis of this Riemann-Hilbert problem, and extract the long-time behavior of the solution to the partial differential equation, the authors invented explicit transformations relating one Riemann-Hilbert problem to another, each transformation in turn simplifying the nature of the subsequent one, until arriving at a final one for which an existence and uniqueness theorem could be established.  Unraveling
the sequence of transformations,  precise analytical descriptions of the behavior of the solution to the 
 partial differential equation can be extracted. 

In each different asymptotic region, the sequence of transformations is different, and tailored to extract from the original Riemann-Hilbert problem the dominant contributing elements to the eventual asymptotic form of
 the solution.  In fact, for the first four regions, the sequence of transformations showed that the dominant 
 contribution comes from a finite number (usually one or two) of  isolated points called stationary phase points
  in the spectral plane.

However, the Riemann-Hilbert analysis for the new collisionless shock region presented a leap in complexity.  
Indeed, the dominant contribution arose from an evolving (finite) collection of intervals in the spectral plane.  Specifically, for $(x,t)$ in the collisionless shock region, four real endpoints emerged: 
$\pm a(x,t)$, $\pm b(x,t)$, with $0 < a(x,t) < b(x,t)< \sqrt{2}$, and $a^{2} + b^{2}=2$. These define 
 intervals $(-b,-a)$ and $(a,b)$.  The authors used the Riemann surface $\mathbb{X}$ associated to the function
%\begin{equation}
$f(z) = \left( z + b \right)^{1/2}
\left( z + a \right)^{1/2}
\left(z - a \right)^{1/2}
\left(z - b \right)^{1/2},$
%\end{equation}
as a fundamental ingredient in their analysis.  The  genus of $\mathbb{X}$ is $1$ and 
the Jacobi elliptic function appearing in \eqref{eq:JacobiEll} is constructed using the Abel map and the periods associated to this surface.

The integral \eqref{entry-4.242.4} appears  in \cite{deift-1994a}, when the authors established 
an explicit representation of the phase $\theta_{0}$ (for arbitrary initial conditions): 
%\marginpar{\textbf{Changed the variables of integration}}
\begin{eqnarray*}
%\begin{aligned}
\theta_0 & = &   \K(\alpha)-\int_1^{\sqrt{b / a}}\left(\left(w^2-1\right)\left(1-(a / b)^2 w^2\right)\right)^{-1 / 2} d w \\
& &  -\frac{1}{2 \pi b } \int_{-a}^{a} \frac{\log \left(2 \gamma w^{2} \right)}{\left(\left(w^2-a^{2}\right)\left(w^2-b^{2} \right)\right)^{1 / 2}} d w. \nonumber
%\end{aligned}
\end{eqnarray*}
The explicit determination of the asymptotic form \eqref{eq:JacobiEll}, including the phase $\theta_{0}$ as well as the size of the error term, highlights the reach of integrability.  

But the occurrence of the integral \eqref{entry-4.242.4} is more than just as a part of the answer.  In order to 
understand the overlap between the collisionless shock region and its two neighboring regions, one must study
 the behavior of the Riemann surface $\mathbb{X}$, the Jacobi elliptic function, and all internal parameters, in
  two 
singular limits: in one of them the branch point $a$ converges to $0$, and in the other one the branch 
points $a$ and $b$ merge together.  Having an explicit representation of this integral in terms of well-known special functions assists greatly in matching  the asymptotic form of the KdV solution in the transition regions.  
There lies the value of \eqref{entry-4.242.4}.

\section{An analytic representation of the integral $I(a,b)$}
\label{sec-analytic-1}

This section presents an analytic expression for the integral $I(a,b)$ in \eqref{entry-4.242.4} in terms of  the function $F$ from \eqref{F-def}.
%the special  function 
%\begin{equation*}
%F(x) = \sum_{n=0}^{\infty} (-1)^{n} H_{n} \binom{2n}{n}^{2} x^{n}.
%\end{equation*}
The change of variables $t = x/b$  yields 
\begin{equation*}
I(a,b) = \frac{\ln b}{a} \int_{0}^{1} \frac{dt}{\sqrt{(1-t^{2})(1-c^{2}t^{2})}} + \frac{1}{a} \int_{0}^{1} \frac{\ln t \, dt}{\sqrt{(1-t^{2})(1-c^{2}t^{2})}}
\end{equation*}
\noindent
with $c = b/a$. Observe that $0< c< 1$. The first integral is $\K(c)$ and so
\begin{equation}
\label{transf-1}
I(a,b)  = \frac{\ln b}{a} \K(c) + \frac{1}{a} \J(c),
\end{equation}
\noindent
where
\begin{equation}
\J(c) = \int_{0}^{1} \frac{\ln x \, dx}{\sqrt{(1-x^{2})(1-c^{2}x^{2})}}.
\label{J-def-0}
\end{equation}

\begin{lemma}
\label{lemma-Jc1}
The integral $\J(c)$ in \eqref{J-def-0} is given by 
\begin{equation*}
\J(c) = -  \ln 2 \, \K(c) - \frac{\pi}{4 \sqrt{1-c^{2}}} F \left( \frac{c^{2}}{16(1-c^{2})} \right).
\end{equation*}
%\noindent
%where $F$ is defined in \eqref{F-def}.
%\begin{equation}
%\label{formula-defFa}
%F(x) = \sum_{n=0}^{\infty} (-1)^{n} H_{n}  \binom{2 n}{n}^{2} \,  x^{n}.
%\end{equation}
\end{lemma}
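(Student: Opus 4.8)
The plan is to turn Lemma~\ref{lemma-Jc1} into a finite binomial sum identity weighted by harmonic numbers, and then to prove that identity by differentiating the Chu--Vandermonde summation with respect to a parameter. First I would expand $(1-c^{2}x^{2})^{-1/2}=\sum_{m\ge 0}\binom{2m}{m}4^{-m}c^{2m}x^{2m}$ and integrate term by term against $\ln x\,(1-x^{2})^{-1/2}$ over $(0,1)$; this is legitimate since, for $0<c<1$, $\sum_{m}\binom{2m}{m}4^{-m}c^{2m}=(1-c^{2})^{-1/2}<\infty$ and the moment integrals $M_{m}:=\int_{0}^{1}x^{2m}\ln x\,(1-x^{2})^{-1/2}\,dx$ satisfy $|M_{m}|\le\int_{0}^{1}|\ln x|(1-x^{2})^{-1/2}\,dx<\infty$, so Tonelli applies. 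The moments are computed by differentiating the Beta integral $\int_{0}^{1}x^{2s}(1-x^{2})^{-1/2}\,dx=\tfrac12 B(s+\tfrac12,\tfrac12)$ in $s$ and setting $s=m$: using $\psi(m+\tfrac12)-\psi(m+1)=2(H_{2m}-H_{m})-2\ln 2$ this gives $M_{m}=\tfrac{\pi}{2}\binom{2m}{m}4^{-m}(H_{2m}-H_{m}-\ln 2)$. Since the same expansion applied to $\K(c)$ itself gives $\K(c)=\tfrac{\pi}{2}\sum_{m}\binom{2m}{m}^{2}16^{-m}c^{2m}$, one obtains
\begin{equation*}
\J(c)=-\ln 2\,\K(c)+\frac{\pi}{2}\sum_{m\ge 0}\binom{2m}{m}^{2}\frac{c^{2m}}{16^{m}}\,(H_{2m}-H_{m}),
\end{equation*}
so the lemma is equivalent to the power series identity $\sum_{m}\binom{2m}{m}^{2}16^{-m}c^{2m}(H_{2m}-H_{m})=-\tfrac{1}{2\sqrt{1-c^{2}}}\,F\!\left(\tfrac{c^{2}}{16(1-c^{2})}\right)$.

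Next I would expand $(1-c^{2})^{-n-1/2}=\sum_{j\ge 0}\tfrac{(n+1/2)_{j}}{j!}c^{2j}$ on the right, collect the coefficient of $c^{2m}$ by putting $m=n+j$, and rewrite $\tfrac{(n+1/2)_{m-n}}{(m-n)!}=\tfrac{(1/2)_{m}}{(1/2)_{n}}$ in terms of factorials. Clearing the common factor $\binom{2m}{m}4^{-m}$ then reduces the whole statement to the finite identity, valid for every $m\ge 0$,
\begin{equation}
\label{plan-star}
\sum_{n=0}^{m}\binom{m}{n}\binom{2n}{n}\frac{(-1)^{n}}{4^{n}}\,H_{n}=-\frac{2}{4^{m}}\binom{2m}{m}\,(H_{2m}-H_{m}).
\end{equation}
To prove \eqref{plan-star} I would write $\binom{2n}{n}(-1/4)^{n}=\binom{-1/2}{n}$ and start from Chu--Vandermonde, ${}_{2}F_{1}(-m,b;d;1)=(d-b)_{m}/(d)_{m}$. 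Specialising to $(b,d)=(\tfrac12+\epsilon,1)$ and to $(b,d)=(\tfrac12+\epsilon,1+\epsilon)$ and differentiating each at $\epsilon=0$ --- using $\tfrac{d}{d\epsilon}(\tfrac12+\epsilon)_{n}\big|_{0}=(\tfrac12)_{n}\bigl(\psi(n+\tfrac12)-\psi(\tfrac12)\bigr)$, $\psi(n+\tfrac12)-\psi(\tfrac12)=2H_{2n}-H_{n}$, $\psi(n+1)-\psi(1)=H_{n}$ --- yields
\begin{align*}
\sum_{n}\binom{m}{n}\binom{-1/2}{n}(2H_{2n}-H_{n})&=-\frac{1}{4^{m}}\binom{2m}{m}(2H_{2m}-H_{m}),\\
\sum_{n}\binom{m}{n}\binom{-1/2}{n}(H_{2n}-H_{n})&=-\frac{1}{2\cdot 4^{m}}\binom{2m}{m}\,H_{m}.
\end{align*}
Subtracting twice the second from the first cancels every $H_{2n}$ inside the summand and collapses the right-hand side to $-\tfrac{2}{4^{m}}\binom{2m}{m}(H_{2m}-H_{m})$, which is exactly \eqref{plan-star}. (Alternatively, \eqref{plan-star} can be handed to Zeilberger's algorithm after writing $H_{n}=\sum_{k=1}^{n}1/k$, in the spirit of \cite{petkovsek-1996a}.)

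Finally, the power series identity reformulating the lemma holds for $0<c<1/\sqrt{2}$, where the series defining $F\!\left(\tfrac{c^{2}}{16(1-c^{2})}\right)$ converges; since $\J(c)$ and $\K(c)$ are analytic on $|c|<1$ and $F$ continues analytically along the relevant path, the identity --- and hence Lemma~\ref{lemma-Jc1} --- extends to all $0<c<1$. The moment evaluation and the factorial bookkeeping are routine; the one genuinely delicate step is \eqref{plan-star}: a single differentiation of Chu--Vandermonde invariably produces a sum weighted by $2H_{2n}-H_{n}$, so isolating the $H_{n}$-weighted sum really does require the two independent specialisations and the cancellation above. That is where the main work lies.
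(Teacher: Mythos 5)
Your proposal is correct (I checked the reduction, the moment evaluation $M_{m}=\tfrac{\pi}{2}\binom{2m}{m}4^{-m}(H_{2m}-H_{m}-\ln 2)$, the finite identity $\sum_{n=0}^{m}\binom{m}{n}\binom{2n}{n}(-1)^{n}4^{-n}H_{n}=-2\cdot 4^{-m}\binom{2m}{m}(H_{2m}-H_{m})$ for small $m$, and the two Chu--Vandermonde differentiations, which do give exactly the stated right-hand sides), but it follows a genuinely different route from the paper. The paper differentiates the whole integral with respect to an exponent parameter: it writes $\A(c,r)=B(r+\tfrac12,\tfrac12)\,{}_2F_1(\tfrac12,r+\tfrac12;r+1;c^{2})$, applies the Pfaff transformation to move the argument to $c^{2}/(c^{2}-1)$, and then a single differentiation at $r=0$ produces the four pieces: a digamma evaluation giving the $-\ln 2\,\K(c)$ term (via $\psi(\tfrac12)=-\gamma-2\ln 2$), an imaginary-modulus transformation of $\K$, and the derivative of the Pochhammer denominator $(r+1)_{\ell}$, which yields the harmonic numbers and hence $F$ at argument $\tfrac{c^{2}}{16(1-c^{2})}$ directly, with no finite combinatorial identity needed. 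You instead work coefficientwise: expand $(1-c^{2}x^{2})^{-1/2}$, evaluate the log-moments by differentiating the Beta function, and then must prove a closed finite binomial--harmonic identity, which you do by differentiating Chu--Vandermonde at two independent specialisations to isolate the $H_{n}$-weight. Your route is more elementary and is in the spirit of the proof of entry $4.242.1$ in \cite{boettner-2011a}, whose key step is the analogous identity \eqref{stefan-1}; it avoids the Pfaff and imaginary-modulus transformations but pays for this with the extra rearrangement (re-expanding $(1-c^{2})^{-n-1/2}$) and an explicit continuation remark for $1/\sqrt{2}<c<1$ --- a restriction that, to be fair, is equally implicit in the paper's own series manipulations, since the hypergeometric series at argument $c^{2}/(c^{2}-1)$ also converges only for $c<1/\sqrt{2}$ and is otherwise understood as the analytic continuation. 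Only a cosmetic nit: $(n+\tfrac12)_{m-n}=(\tfrac12)_{m}/(\tfrac12)_{n}$, so the displayed ratio should keep the factor $(m-n)!$ separate; this does not affect the final identity, which is the correct one.
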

\begin{proof}
%Start with the change of variables $t = x/b$  yields 
%\begin{equation*}
%I(a,b) = \frac{\ln b}{a} \int_{0}^{1} \frac{dt}{\sqrt{(1-t^{2})(1-c^{2}t^{2})}} + \frac{1}{a} \int_{0}^{1} \frac{\ln t \, dt}{\sqrt{(1-t^{2})(1-c^{2}t^{2})}}
%\end{equation*}
%\noindent
%with $c = b/a$. Observe that $0< c< 1$. The first integral is $\K(c)$ and then
%\begin{equation}
%\label{transf-1}
%I(a,b)  = \frac{\ln b}{a} \K(c) + \frac{1}{a} \J(c),
%\end{equation}
%\noindent
%with 
%\begin{equation}
%\J(c) = \int_{0}^{1} \frac{\ln x \, dx}{\sqrt{(1-x^{2})(1-c^{2}x^{2})}}.
%\label{J-def-0}
%\end{equation}

The change of variables $s = t^{2}$ yields 
\begin{equation*}
%\label{transf-2a}
\J(c) = \frac{1}{4} \int_{0}^{1} \frac{\ln s \, ds}{\sqrt{s (1-s)(1- c^{2}s)}}.
\end{equation*}
\noindent
To evaluate $\J(c)$ consider the function 
\begin{equation*}
\A(c,r) = \int_{0}^{1} s^{r} s^{-1/2} (1-c^{2} s)^{-1/2} (1-s)^{-1/2} \, ds
\end{equation*}
\noindent
and observe that
\begin{equation*}
\J(c) = \frac{1}{4}\frac{d}{dr} \A(c,r)\Big{|}_{r=0}.
%\label{form-j1}
\end{equation*}
\noindent
The integral representation (\cite[$9.111$]{gradshteyn-2015a}) of the hypergeometric function
\begin{equation*}
%\label{hyper-1}
\pFq21{\alpha \,\, \beta}{\gamma}{z} = \frac{1}{B(\beta, \gamma- \beta)} \int_{0}^{1} t^{\beta-1} (1-t)^{\gamma - \beta - 1} (1 - tz)^{-\alpha} \, dt 
\end{equation*}
\noindent
implies
\begin{equation*}
%\label{form-forA}
\A(c,r) = B \left( r + \frac{1}{2}, \frac{1}{2} \right) \pFq21{\frac{1}{2} \,\, \, \, r + \tfrac{1}{2}}{r+1}{\, c^{2}}.
\end{equation*}
\noindent
The relation (see \cite[$9.131.1$]{gradshteyn-2015a})
\begin{equation*}
%\label{hyper-2}
\pFq21{\alpha \,\, \beta}{\gamma}{z} =  (1-z)^{-\alpha} \pFq21{\alpha \,\,\,\,  \gamma - \beta}{\gamma}{\frac{z}{z-1}}
\end{equation*}
\noindent 
yields 
\begin{equation*}
\A(c,r) = (1-c^{2})^{-1/2} B \left( r + \frac{1}{2}, \frac{1}{2} \right) \pFq21{ \tfrac{1}{2} \,\,\,\,  \tfrac{1}{2}}{r+1}{\frac{c^{2}}{c^{2}-1}}.
\end{equation*}
\noindent
Write  $\A(c,r) = (1-c^{2})^{-1/2} A_{1}(r) C_{1}(c,r)$ where
\begin{equation*}
A_{1}(r)  = B \left( r + \tfrac{1}{2}, \tfrac{1}{2} \right) \quad \textnormal{and} \quad  C_{1}(c,r) = \pFq21{ \tfrac{1}{2} \,\,\,\,  \tfrac{1}{2}}{r+1}{\frac{c^{2}}{c^{2}-1}},
\end{equation*}
\noindent
so that 
\begin{equation*}
%\label{der-r0}
\J(c) = \frac{1}{4} \frac{d}{dr} \A(c,r)\Big{|}_{r=0} =  \frac{1}{4 \sqrt{1-c^{2}}} \left[ A_{1}(0) C_{1}'(c,0) + A_{1}'(0) C_{1}(c,0) \right],
\end{equation*}
\noindent
where $C_{1}'$ is the derivative with respect to $r$. Each of these four terms are evaluated individually. 

\smallskip 

\noindent
\texttt{First term}: $A_{1}(0)$. The beta function is given by 
\begin{equation*}
B(x,y) = \frac{\Gamma(x) \Gamma(y)}{\Gamma(x+y)},
\end{equation*}
\noindent
so that 
\begin{equation*}
A_{1}(0) = \frac{\Gamma( r + \tfrac{1}{2}) \Gamma( \tfrac{1}{2})}{\Gamma(r+1)}\Big{|}_{r=0} = \pi. 
\end{equation*}

\smallskip 

\noindent
\texttt{Second term}: $C_{1}(c,0)$. The value is given by 
\begin{equation*}
C_{1}(c,0)  =   \pFq21{ \tfrac{1}{2} \,\,\,\, \,\,\, \tfrac{1}{2} }{1} {\frac{c^{2}}{c^{2}-1}}.
\end{equation*}
\noindent
On the other hand, this hypergeometric value corresponds to the complete elliptic integral (see \cite[8.113.1]{gradshteyn-2015a}) 
\begin{equation*}
%\label{value-K1}
\K(k) = \frac{\pi}{2}  \pFq21{ \tfrac{1}{2} \,\,\,\, \,\,\, \tfrac{1}{2} }{1} {k^{2}}
\end{equation*}
\noindent
so that  
\begin{equation*}
C_{1}(c,0) = \frac{2}{\pi} \K \left( \sqrt{ \frac{c^{2}}{c^{2}-1} } \right).
\end{equation*}

Observe that $ c^{2} = b^{2}/a^{2} < 1$, therefore the argument of $\K$ above is purely imaginary.  The transformation rule (of the 
imaginary modulus) reads
\begin{equation}
\label{K-imag1}
\K(it) = \frac{1}{\sqrt{1+t^{2}}} \K \left( \frac{t}{\sqrt{1+t^{2}}} \right).
\end{equation}
(see \cite[Page 82]{mckean-1997a}), which gives
\begin{equation*}
C_{1}(c,0) =  \frac{2}{\pi} \sqrt{1-c^{2}} \, \K(c).
\end{equation*}

\smallskip

\noindent
\texttt{Third term}: $A_{1}'(0)$.  The function $A_{1}(r)$ is given by 
\begin{equation*}
A_{1}(r) = B(r + \tfrac{1}{2}, \tfrac{1}{2}) = \Gamma( \tfrac{1}{2}) \frac{\Gamma( r + \tfrac{1}{2} )}{\Gamma(r+1)}.
\end{equation*}
\noindent
Differentiation at $r=0$ yields  
\begin{equation*}
%\label{der-A1}
A_{1}'(0) = \frac{\Gamma(\tfrac{1}{2})}{\Gamma^{2}(1) } \left[ \Gamma'(\tfrac{1}{2}) \Gamma(1) - \Gamma'(1) \Gamma(\tfrac{1}{2} ) \right]. 
\end{equation*}
\noindent
The digamma function $\psi$, defined by $ \Gamma'(x) = \psi(x) \Gamma(x)$, shows that
\begin{equation*}
A_{1}'(0) = \frac{\Gamma(\tfrac{1}{2} )}{\Gamma^{2}(1)} 
\left[ \Gamma(\tfrac{1}{2} ) \psi( \tfrac{1}{2} ) \Gamma(1) -  \Gamma(1) \psi(1)  \Gamma( \tfrac{1}{2}) \right],
\end{equation*}
%\begin{equation*}
%A_1\left( 0 \right) = \frac{{\Gamma \left( {\frac{1}
%{2}} \right)}}
%{{{\Gamma ^2}\left( 1 \right)}}\left[ {\Gamma \left( {\frac{1}
%{2}} \right)\psi \left( {\frac{1}
%{2}} \right)\Gamma \left( 1 \right) - \psi \left( 1 \right)\Gamma \left( 1 \right)\Gamma \left( {\frac{1}
%{2}} \right)} \right]
%\end{equation*}
\noindent
and  the special values 
$\Gamma(1) = 1, \, \Gamma( \tfrac{1}{2} ) = \sqrt{\pi}, \, \psi(1) = - \gamma, \, \psi(\tfrac{1}{2} ) = - \gamma - 2 \ln 2$
appearing in \cite[$8.338.1, \, 8.338.2, \, 8.366.1$ and $8.366.2$]{gradshteyn-2015a} generate
\begin{equation*}
%\label{der-A2}
A_{1}'(0) = - 2 \pi \ln 2.
\end{equation*}

\smallskip

\noindent
\texttt{Fourth term}: $C_{1}'(c,0)$.  Start with
\begin{equation*}
C_{1}(c,r)  =   \pFq21{ \tfrac{1}{2} \,\,\,\, \,\,\, \tfrac{1}{2} }{r+1} {\frac{c^{2}}{c^{2}-1}}
\end{equation*}
\noindent
and since $0< c^{2} = b^{2}/a^{2} < 1$, it is convenient to introduce the parameter
\begin{equation*}
t = \frac{c^2}{1-c^{2}},
\end{equation*}
\noindent 
so that $t>0$ and 
\begin{equation*}
C_{1}(c,r)  =   \pFq21{ \tfrac{1}{2} \,\,\,\, \,\,\, \tfrac{1}{2} }{r+1} {-t} = \sum_{\ell = 0}^{\infty} (-1)^{\ell} \frac{ \left( \tfrac{1}{2} \right)_{\ell}^{2}}{(r+1)_{\ell}} 
\frac{t^{\ell}}{\ell!}.
\end{equation*}
\noindent
 The only term that needs to be differentiated is the Pochhammer symbol $(r+1)_{\ell}$.
To accomplish this, proceed in the manner
\begin{equation*}
\frac{d}{dr} \frac{1}{(r+1)_{\ell}} = - \frac{1}{(r+1)_{\ell}^{2}} \frac{d}{dr} (r+1)_{\ell},
\end{equation*}
\noindent
and then expanding $(r+1)_{\ell} = (r+1)(r+2) \cdots (r+\ell)$ leads to
\begin{equation*}
%\label{der-poch1}
\frac{d}{dr} (r+1)_{\ell} = (r+1)_{\ell} \sum_{j=1}^{\ell} \frac{1}{r+j}.
\end{equation*}
\noindent
Evaluating at $r=0$ results in 
\begin{equation*}
%\label{der-poch2}
\frac{d}{dr} \frac{1}{(r+1)_{\ell}}\Big{|}_{r=0}  = - \frac{1}{(r+1)_{\ell}} \sum_{j=1}^{\ell} \frac{1}{r+j}\Big{|}_{r=0} = 
 - \frac{1}{\ell!} H_{\ell}
\end{equation*}
\noindent
where $H_{\ell}$ is the harmonic number $H_{\ell} = 1 + \tfrac{1}{2} + \cdots + \tfrac{1}{\ell}$.  The \texttt{fourth term} becomes
\begin{equation*}
C_{1}'(c,0) = - \sum_{\ell = 0}^{\infty} (-1)^{\ell} \binom{2 \ell}{\ell}^{2} \left( \frac{c^{2}}{16(1-c^{2})} \right)^{\ell} H_{\ell},
\end{equation*}
\noindent
using the identity 
\begin{equation}
\binom{\tfrac{1}{2}}{n}  = \frac{(-1)^{n-1}}{2^{2n} (2n-1)} \binom{2n}{n}
\label{binom-form1}
\end{equation}
\noindent 
to simplify the result.   The proof is complete.
\end{proof}

%\begin{lemma}
%\label{lemma-Jc1}
%The integral $\J(c)$ in \eqref{J-def-0} is given by 
%\begin{equation*}
%\J(c) = -  \ln 2 \, \K(c) - \frac{\pi}{4 \sqrt{1-c^{2}}} F \left( \frac{c^{2}}{16(1-c^{2})} \right),
%\end{equation*}
%\noindent
%where $F$ is defined in \eqref{F-def}.
%\begin{equation}
%\label{formula-defFa}
%F(x) = \sum_{n=0}^{\infty} (-1)^{n} H_{n}  \binom{2 n}{n}^{2} \,  x^{n}.
%\end{equation}
%\end{lemma}

\smallskip

The integral $I(a,b)$  is now expressed in terms of the function $F$ defined in \eqref{F-def}.

\begin{theorem}
\label{thm-rep1}
Preserving the notations from Lemma \ref{lemma-Jc1}, we have 
%The integral 
%\begin{equation}
%I(a,b) =  \int_{0}^{b} \frac{\ln x \, dx}{\sqrt{(a^{2}-x^{2})(b^{2}-x^{2})}}
%\end{equation}
%\noindent
%is given by 
\begin{equation*}
%\label{formula-70}
I(a,b) = \frac{1}{a} \ln \left( \frac{b}{2} \right) \K \left( \frac{b}{a} \right) - 
\frac{\pi}{4 \sqrt{a^{2}-b^{2}}} F\left(\frac{b^{2}}{16(a^{2}-b^{2})} \right).
\end{equation*}
%\noindent
%with $F$ as in \eqref{formula-defFa}.
\end{theorem}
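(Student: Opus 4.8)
The plan is that this theorem is a direct consequence of the two preparatory results already in hand: the change-of-variables identity \eqref{transf-1}, which records $I(a,b) = \frac{\ln b}{a}\K(c) + \frac{1}{a}\J(c)$ with $c = b/a$, and the closed form for $\J(c)$ supplied by Lemma \ref{lemma-Jc1}. So I would simply substitute the latter into the former and then carry out the elementary algebraic simplifications dictated by the relation $c = b/a$. No new analytic input is needed; the whole argument is bookkeeping.

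Concretely, first I would insert
\[
\J(c) = -\ln 2\,\K(c) - \frac{\pi}{4\sqrt{1-c^{2}}}\,F\!\left(\frac{c^{2}}{16(1-c^{2})}\right)
\]
into \eqref{transf-1}, obtaining
\[
I(a,b) = \frac{1}{a}\bigl(\ln b - \ln 2\bigr)\K(c) - \frac{\pi}{4a\sqrt{1-c^{2}}}\,F\!\left(\frac{c^{2}}{16(1-c^{2})}\right).
\]
Then I would collect the elliptic-integral term by writing $\ln b - \ln 2 = \ln(b/2)$, and recall $c = b/a$ so that $\K(c) = \K(b/a)$.

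The remaining two simplifications are the substitutions that eliminate $c$ in favor of $a$ and $b$. For the prefactor, $a\sqrt{1-c^{2}} = a\sqrt{1 - b^{2}/a^{2}} = \sqrt{a^{2}-b^{2}}$ (using $0<b<a$ so the radicand is positive). For the argument of $F$,
\[
\frac{c^{2}}{16(1-c^{2})} = \frac{b^{2}/a^{2}}{16\,(a^{2}-b^{2})/a^{2}} = \frac{b^{2}}{16(a^{2}-b^{2})}.
\]
Putting these together yields exactly the asserted formula
\[
I(a,b) = \frac{1}{a}\ln\!\left(\frac{b}{2}\right)\K\!\left(\frac{b}{a}\right) - \frac{\pi}{4\sqrt{a^{2}-b^{2}}}\,F\!\left(\frac{b^{2}}{16(a^{2}-b^{2})}\right),
\]
completing the proof.

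There is no genuine obstacle here: the difficulty of the entry has been entirely absorbed into Lemma \ref{lemma-Jc1} (the hypergeometric manipulation and the harmonic-number expansion) and, downstream, into the differential-equation argument of Section \ref{sec-automatic} that will be used to identify $F$ with the elliptic expression on the right-hand side of \eqref{entry-4.242.4}. The only thing worth double-checking in the present step is that the two $c \mapsto b/a$ substitutions above are performed consistently, so that the $\sqrt{a^{2}-b^{2}}$ appearing in the prefactor matches the one implicit in the argument of $F$.
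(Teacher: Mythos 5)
Your proposal is correct and is exactly the argument the paper intends (the paper omits the proof precisely because Theorem \ref{thm-rep1} is immediate from substituting Lemma \ref{lemma-Jc1} into \eqref{transf-1} and using $c=b/a$, so that $a\sqrt{1-c^{2}}=\sqrt{a^{2}-b^{2}}$ and $c^{2}/(16(1-c^{2}))=b^{2}/(16(a^{2}-b^{2}))$). The bookkeeping is carried out correctly, so nothing further is needed.
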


\section{An analytic proof of the main identity}
\label{sec-anal-main}

The previous section has given an expression for 
\begin{equation*}
I(a,b) =  \int_{0}^{b} \frac{\ln x \, dx}{\sqrt{(a^2-x^2)(b^2-x^2)}}
\end{equation*}
\noindent
 in terms of the function $F$ defined in \eqref{F-def}. 
 %\begin{equation*}
 %F(x) = \sum_{n=0}^{\infty} (-1)^{n} H_{n} \binom{2n}{n}^{2} x^{n}.
 %\end{equation*}
This  section delivers  a direct  analytic proof of the main identity displayed in \eqref{entry-4.242.4}.

\begin{theorem}
The following identity holds:
\begin{equation}
\label{main-1}
I(a,b) = \frac{1}{2a} \left[ \K \left( \frac{b}{a} \right)  \ln(ab) - \frac{\pi}{2} \K \left( \frac{\sqrt{a^{2}-b^{2}}}{a}  \right)\right]. 
\end{equation}
\end{theorem}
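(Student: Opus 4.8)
The plan is to deduce \eqref{main-1} from the representation already proved in Theorem \ref{thm-rep1}. Set $k=b/a$ and $k'=\sqrt{a^{2}-b^{2}}/a=\sqrt{1-k^{2}}$, so that $\sqrt{a^{2}-b^{2}}=ak'$ and $\dfrac{b^{2}}{16(a^{2}-b^{2})}=\dfrac{k^{2}}{16k'^{2}}$. With this notation Theorem \ref{thm-rep1} becomes
\begin{equation*}
I(a,b)=\frac{1}{a}\ln\!\Big(\frac{ak}{2}\Big)\K(k)-\frac{\pi}{4ak'}F\!\left(\frac{k^{2}}{16k'^{2}}\right).
\end{equation*}
Since $\ln(ab)=2\ln a+\ln k$ and $\ln(b/2)=\ln a+\ln k-\ln2$, subtracting the common term $\tfrac1a\K(k)\ln a$ from both sides of the desired \eqref{main-1} and simplifying the logarithms shows that \eqref{main-1} is \emph{equivalent} to the single power–series identity
\begin{equation}
\label{eq-sk-red}
F\!\left(\frac{k^{2}}{16k'^{2}}\right)=\frac{2k'}{\pi}\K(k)\ln\frac{k}{4}+k'\K(k').
\end{equation}

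To handle the right-hand side of \eqref{eq-sk-red} I would invoke the classical expansion of the complementary integral in powers of $k^{2}$ — the logarithmic connection formula for ${}_{2}F_{1}(\alpha,\beta;\alpha+\beta;z)$ applied to $\K(k')=\tfrac{\pi}{2}\,{}_{2}F_{1}\!\left(\tfrac12,\tfrac12;1;1-k^{2}\right)$; see, e.g., \cite{mckean-1997a} —
\begin{equation*}
\K(k')=\frac{2}{\pi}\K(k)\ln\frac{4}{k}-\sum_{n=0}^{\infty}\frac{\big(\tfrac12\big)_{n}^{2}}{n!^{2}}\Bigg(\sum_{j=1}^{n}\frac{1}{j(2j-1)}\Bigg)k^{2n}.
\end{equation*}
Because $\ln(k/4)=-\ln(4/k)$, the two logarithmic terms in \eqref{eq-sk-red} cancel and the statement collapses to the clean identity
\begin{equation}
\label{eq-sk-clean}
F\!\left(\frac{k^{2}}{16k'^{2}}\right)=-k'\sum_{n=0}^{\infty}\frac{\big(\tfrac12\big)_{n}^{2}}{n!^{2}}\Bigg(\sum_{j=1}^{n}\frac{1}{j(2j-1)}\Bigg)k^{2n}.
\end{equation}

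Finally I would prove \eqref{eq-sk-clean} by a short hypergeometric computation. From $\binom{2n}{n}^{2}=16^{n}\big(\tfrac12\big)_{n}^{2}/n!^{2}$ the left side of \eqref{eq-sk-clean} equals $\sum_{n\ge0}\frac{(1/2)_{n}^{2}}{n!^{2}}H_{n}\big({-}\tfrac{k^{2}}{k'^{2}}\big)^{n}$, and the identity $H_{n}=-\frac{d}{dr}\big|_{r=0}\frac{n!}{(r+1)_{n}}$ already exploited in the proof of Lemma \ref{lemma-Jc1} rewrites it as $-\frac{d}{dr}\big|_{r=0}\,{}_{2}F_{1}\!\left(\tfrac12,\tfrac12;r+1;-\tfrac{k^{2}}{k'^{2}}\right)$. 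Now apply the Pfaff transformation \cite[$9.131.1$]{gradshteyn-2015a} with $\alpha=\beta=\tfrac12$, $\gamma=r+1$, $z=-k^{2}/k'^{2}$: here $1-z=1/k'^{2}$ and $z/(z-1)=k^{2}$, so ${}_{2}F_{1}\!\left(\tfrac12,\tfrac12;r+1;-\tfrac{k^{2}}{k'^{2}}\right)=k'\,{}_{2}F_{1}\!\left(\tfrac12,r+\tfrac12;r+1;k^{2}\right)$, whence
\begin{equation*}
F\!\left(\frac{k^{2}}{16k'^{2}}\right)=-k'\,\frac{d}{dr}\Big|_{r=0}\,{}_{2}F_{1}\!\left(\tfrac12,r+\tfrac12;r+1;k^{2}\right).
\end{equation*}
In the series $\sum_{n\ge0}\frac{(1/2)_{n}(r+1/2)_{n}}{(r+1)_{n}\,n!}k^{2n}$ the only $r$-dependence is the ratio $(r+\tfrac12)_{n}/(r+1)_{n}$, and its logarithmic derivative at $r=0$ is $\sum_{j=0}^{n-1}\!\big(\tfrac{1}{j+1/2}-\tfrac{1}{j+1}\big)=2\sum_{j=1}^{n}\tfrac{1}{2j-1}-H_{n}=\sum_{j=1}^{n}\tfrac{1}{j(2j-1)}$, by a one-line partial-fraction simplification. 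This reproduces exactly the right-hand side of \eqref{eq-sk-clean}, completing the proof.

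In this route there is no single hard step; the effort is organizational. The one non-elementary ingredient is the classical logarithmic expansion of $\K(k')$, and a fully self-contained write-up has to derive it from the Gauss connection formula; the logarithmic bookkeeping in passing from Theorem \ref{thm-rep1} to \eqref{eq-sk-red} likewise must be done carefully. One should also note that the series \eqref{F-def} for $F$ converges only for $|x|<\tfrac{1}{16}$, i.e. for $b/a<1/\sqrt2$, so the identities above are literal in that range; for $b/a\in[1/\sqrt2,1)$ the identity \eqref{main-1} then follows by analytic continuation in $b/a$ (equivalently, by reading $F$ through the fourth-order differential equation of Section \ref{sec-automatic}).
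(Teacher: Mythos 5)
Your proposal is correct, but it follows a genuinely different route from the paper's own proof of this theorem. The paper computes $\J(c)$ in \eqref{int-j1b} directly by elliptic-function theory: it passes to a Weierstrass parametrization $x=\wp(z)$, expresses the integrand through the $\sigma$-function via \eqref{P}, evaluates the resulting integral $L(\tau)$ as a quadratic polynomial, and closes with the quasi-periodicity relations and Legendre's identity to reach \eqref{new-J1a}, from which \eqref{main-1} follows via \eqref{transf-1}. You instead take Theorem \ref{thm-rep1} as the starting point and reduce \eqref{main-1} to the single series identity for $F\bigl(k^{2}/(16k'^{2})\bigr)$, which you then prove by the Pfaff transformation and an $r$-derivative of ${}_{2}F_{1}\bigl(\tfrac12,r+\tfrac12;r+1;k^{2}\bigr)$, matched against the classical logarithmic connection formula for $\K(k')$; your algebra checks out at every step (the reduction to your intermediate identity, the stated expansion of $\K(k')$, and the partial-fraction evaluation $2\sum_{j\le n}\tfrac{1}{2j-1}-H_{n}=\sum_{j\le n}\tfrac{1}{j(2j-1)}$ are all correct). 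In effect you give a classical, human proof of the closed form of $F$ that the paper obtains automatically in Section \ref{sec-automatic} (Theorem \ref{thm-christoph}), and then run the logic of Corollary \ref{coro-F1} in reverse; there is no circularity, since Theorem \ref{thm-rep1} is proved independently. What the two approaches buy: yours stays entirely within the hypergeometric toolkit already used in Section \ref{sec-analytic-1} and avoids Weierstrass theory, at the price of importing one nontrivial classical ingredient (the Gauss connection formula giving the $\ln(4/k)$ expansion of $\K(k')$) and of the convergence caveat $b/a<1/\sqrt{2}$, which you correctly dispose of by analytic continuation in $c=b/a$; the paper's elliptic proof needs no such continuation, is self-contained given Whittaker--Watson, and produces the clean auxiliary evaluation \eqref{new-J1a} of $\J(c)$ as a by-product, which is also what feeds Corollary \ref{coro-F1}.
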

%\noindent
%As a corollary we obtain the  expression 
%\begin{equation}
%F(x) = \frac{1}{\pi \sqrt{1+16x}} 
%\left[ \ln \left( \frac{x}{1+16x} \right) \K \left( \sqrt{\frac{16x}{1+16x}} \right) + \pi \K \left( \frac{1}{\sqrt{1+16x}} \right) \right].
%\end{equation}
%
%\smallskip 

\begin{proof}
The evaluation  \eqref{main-1} has been reduced, in \eqref{transf-1},  to
\begin{equation*}
%\label{transf-2}
I(a,b)  = \frac{\ln b}{a} \K(c) + \frac{1}{a} \J(c),
\end{equation*}
\noindent
with $c = b/a$ and where, after an elementary change of variables, 
\begin{equation}
\label{int-j1b}
\J(c) = \frac{1}{4} \int_{0}^{1} \frac{\ln s \, ds}{\sqrt{s(1-s)(1-c^{2}s)}}.
\end{equation}
\noindent
So, the proof  in this section amounts to a direct computation of $\J(c)$.

Start by  transforming  the interval of integration to a half-line of the new variable $x$, defined 
 via $s = 3/(3x+c^{2}+1)$.  Then 
\eqref{int-j1b} becomes 
\begin{eqnarray}
\label{int-j2}
\J(c) & = & - \frac{1}{2} \int_{\tfrac{1}{3}(2-c^{2})}^{\infty} \frac{\log( x + \tfrac{1}{3}(c^{2}+1)) \, dx}{
\sqrt{4 ( x + \tfrac{c^{2}+1}{3}) (x + \tfrac{c^{2}-2}{3})(x + \tfrac{1-2c^{2}}{3} )}} \\
& = & - \frac{1}{2} \int_{\tfrac{1}{3}(2-c^{2})}^{\infty} 
\frac{\log( x + \tfrac{1}{2}(c^{2}+1)) \, dx }{\sqrt{4 x^{3} - g_{2}x - g_{3}}}, \nonumber 
\end{eqnarray}
\noindent
with 
\begin{equation*}
g_{2} = \tfrac{4}{3}(c^{4}-c^{2}+1) \quad \textnormal{and} \quad g_{3} = \tfrac{4}{27} (1+c^{2})(1-2 c^{2})(2 - c^{2}).
\end{equation*}
\noindent
The discriminant of the cubic is $g_{2}^{3} - 27 g_{3}^{2} = 16c^4(c+1)^2(c-1)^2 \neq 0$ 
for $0 < c < 1$.  The roots of the cubic are real and are given by
\begin{equation*}
e_{1} = \tfrac{1}{3}(2 - c^{2})> e_{2} = \tfrac{1}{3}(2c^{2}-1) > e_{3} = - \tfrac{1}{3}(c^{2}+1).
\end{equation*}

Consider the  curve $\mathcal{E}$ defined by the equation
\begin{equation*}
y^{2} = 4 x^{3} - g_{2}x - g_{3}.
\end{equation*}
\noindent
It is well-known that (the projectivation of) $\mathcal{E}$ is a torus $\mathbb{C}/(\mathbb{Z} 2 \omega_{1} \oplus \mathbb{Z } 2\omega_{2})$, with an 
associated Weierstrass $\wp$-function satisfying 
\begin{equation*}
(\wp'(z))^{2} = 4 \wp(z)^{3} - g_{2} \wp(z) - g_{3} = 4(\wp(z) - e_{1})(\wp(z) - e_{2})(\wp(z) - e_{3}).
\end{equation*}
The parameters $\omega_{1}, \, \omega_{2}$ being the half-periods for $\wp(z)$, satisfy 
\begin{equation*}
\wp(\omega_{1}) = e_{1}, \quad \wp(\omega_{2}) = e_{3} \quad \textnormal{and} \quad \wp(\omega_{3}) = e_{2},
\end{equation*}
\noindent
with $\omega_{3} = \tfrac{1}{2}(\omega_{1}+\omega_{2})$. The periods can be written, using $P(x) = 4x^{3} - g_{2}x - g_{3}$, as 
\begin{equation*}
\frac{\omega_{1}}{2} = \int_{\infty}^{e_{1}} \frac{dt}{\sqrt{P(t)}} \quad \textnormal{and} \quad \frac{\omega_{2} }{2}= \int_{e_{1}}^{e_{2}} \frac{dt}{\sqrt{P(t)} }
\end{equation*}
\noindent
so that $\omega_{1}$ is real  and $\omega_{2}$ is purely imaginary.  See \cite{whittaker-2020a} for details. 

The integral in \eqref{int-j2} now becomes 
\begin{equation*}
\J(c) = - \frac{1}{2} \int_{e_{1}}^{\infty} \frac{\log( x - e_{3}) \, dx}{\sqrt{4(x-e_{1})(x-e_{2})(x-e_{3})}}. 
\end{equation*}
\noindent
Substitute $x = \wp(z)$ with $\wp'(z) = - \sqrt{4(\wp(z) - e_{1})(\wp(z)-e_{2})(\wp(z) - e_{3})}$ (the negative sign is due to 
the fact that  since $\wp$ is real and decreasing in the interval of integration). Since $\wp$ is an 
even function, it follows that 
\begin{equation}
\label{int-j3}
\J(c) =- \frac{1}{2} \int_{0}^{\omega_{1}} \log( \wp(z) - \wp(\omega_{2})) \, dz =
 - \frac{1}{4} \int_{-\omega_{1}}^{\omega_{1}} \log( \wp(z) - \wp(\omega_{2})) \, dz.
\end{equation}
\noindent
%where  the fact that $\wp$ is an even function has been used. 

At this point, introduce the Weierstrass zeta-function $\zeta(z;g_{2},g_{3})$ (\cite[page 467]{whittaker-2020a}) defined by 
\begin{equation*}
\frac{d}{dz} \zeta(z;g_{2},g_{3}) = - \wp(z; g_{2},g_{3}) 
\end{equation*}
\noindent
and the normalization $\begin{displaystyle} \lim\limits_{z \rightarrow 0 } \left[ \zeta(z;g_{2},g_{3}) - 1/z \right] = 0. \end{displaystyle}$  The Weierstrass
 sigma-function (\cite[page 469]{whittaker-2020a})  is then  defined by 
\begin{equation*}
\frac{d}{dz} \log \sigma(z;g_{2},g_{3}) = \zeta(z;g_{2},g_{3})
\end{equation*}
\noindent
with the corresponding 
normalization  $\begin{displaystyle} \lim\limits_{z \rightarrow 0 } \frac{\sigma(z)}{z} = 1. \end{displaystyle}$ The parameters $g_{2}$ and $g_{3}$
are suppressed and 
 we simply write $\zeta(z)$ and $\sigma(z)$. The function $\sigma$ is odd, \texttt{quasi-periodic} \cite[page 470]{whittaker-2020a} and satisfies 
  the relations 
\begin{equation}
\sigma(z+ 2 \omega_{j})  =  - e^{2 \eta_{j} (z+ \omega_{j})} \sigma(z), \label{sigma-quasi} 
\end{equation}
\noindent
with $\eta_{j} = \zeta(\omega_{j})$.

Now use the identity \cite[page 473]{whittaker-2020a}
\begin{equation}
\label{P}
\wp(u) - \wp(v) = - \frac{\sigma(u+v) \sigma(u-v)}{\sigma^{2}(u) \sigma^{2}(v)},
\end{equation}
\noindent
to write 
\begin{eqnarray}
\wp(z) - \wp(\omega_{2})   =   -  \frac{\sigma(z+\omega_{2}) \sigma(z-\omega_{2})}{\sigma^{2}(z) \sigma^{2}(\omega_{2})}
 \label{P2}  
 =  \frac{\sigma^{2}(z+\omega_{2}) e^{-2 \eta_{2}z}}{\sigma^{2}(z) \sigma^{2}(\omega_{2})}, \nonumber 
\end{eqnarray}
\noindent
and convert  \eqref{int-j3} into 
\begin{multline*}
\J(c) = -\frac{1}{2} \int_{-\omega_{1}}^{\omega_{1}} \log(\sigma(z+ \omega_{2}))  \, dz 
+ \frac{1}{2} \int_{-\omega_{1}}^{\omega_{1}}  \log(\sigma(z)) \, dz +
 \omega_{1} \log(\sigma(\omega_{2})),
\end{multline*}
\noindent
since the integral arising from $e^{2 \eta_{2}z}$ vanishes. Define 
\begin{equation*}
L(\tau) := \int_{- \omega_{1}}^{\omega_{1}} \log \sigma(z + \tau) \, dz
\end{equation*}
%
%\marginpar{\textbf{FIX DOWN HERE}}
%\begin{multline}
%\J(c) = - \omega_{1} \log[ - \sigma(\omega_{2}) ] + \frac{1}{4} \int_{-\omega_{1}}^{\omega_{1}} \log( \sigma(z+\omega_{2})) \, dz  + \\
%\frac{1}{4} \int_{-\omega_{1}}^{\omega_{1}} \log( \sigma(z-\omega_{2})) \, dz  - 
%\frac{1}{2} \int_{-\omega_{1}}^{\omega_{1}} \log( \sigma(z)) \, dz.
%\end{multline}
%
%The change of variables $z \mapsto -z$, the fact that $\sigma(-z) = - \sigma(z)$ and the relation 
%$\log(-\sigma(z)) = \log(\sigma(z)) + \pi i$ yields 
%\begin{equation}
%\int_{-\omega_{1}}^{\omega_{1} } \log(\sigma(z-\omega_{2})) \, dz = \int_{-\omega_{1}}^{\omega_{1} } \log(\sigma(z+\omega_{2})) \, dz  + 2 i \pi \omega_{1},
%\end{equation}
%\noindent
%and this yields 
\noindent
to write
\begin{equation}
\label{JL-1}
\J(c) =   \omega_{1} \log \sigma(\omega_{2}) -  \tfrac{1}{2} \left[ L(\omega_{2}) -  L(0) \right].
\end{equation}

The evaluation of $L(\tau)$ begins with
\begin{equation*}
\frac{d^{3}}{d \tau^{3}} L(\tau) = - \int \wp'(z+ \tau) \, dz = - \wp(\tau+\omega_{1}) + \wp(\tau - \omega_{1} )= 0
\end{equation*}
\noindent
since $2 \omega_{1}$ is a period of $\wp$. It follows that 
\begin{equation*}
L(\tau) = C_{0} \tau^{2} + C_{1}\tau + C_{2},
\end{equation*}
\noindent
for some constants $C_{0}, \, C_{1}, \, C_{2}$. 

The value $C_{0}$ comes from $J''(\tau)  = \zeta(\tau+\omega_{1}) - \zeta(\tau - \omega_{1})$ being constant. Now 
$L''(\tau) \equiv L''(0) = 2 \zeta(\omega_{1})$, since $\zeta$ is an odd function. This gives $C_{0} = \zeta(\omega_{1}) =  \eta_{1}$. 
%
%and 
%\begin{equation}
%\J(c) = \omega_{1} \log (\sigma(\omega_{2})^{4}) - 2 \eta_{1} \omega_{2}^{2} - 2C_{1} \omega_{2}.
%\end{equation}
%\noindent
The computation of $C_{1} = L'(0)$ uses the branch cut and gives 
\begin{eqnarray*}
C_{1} & = & \lim\limits_{\tau \rightarrow 0} \left( \log \sigma(\tau + \omega_{1}) - \log \sigma (\tau - \omega_{1}) \right)  \\
& = &  \lim\limits_{\tau \rightarrow 0} \left( \log \sigma(\tau + \omega_{1}) - \log \left[ -  \sigma (- \tau  +  \omega_{1}) \right]  \right)  \nonumber \\
& = &  - \pi i,  \nonumber 
\end{eqnarray*}
\noindent
since $\sigma$ is odd. Thus $L(\tau) = \eta_{1} \tau^{2} - \pi i \tau + C_{2}$. 
 Legendre's identity $\eta_{1} \omega_{2} - \eta_{2} \omega_{1} = \tfrac{1}{2} \pi i $ 
\cite[page 469]{whittaker-2020a} and \eqref{JL-1} yield
\begin{equation}
\label{formula-j10}
\J(c) = \omega_{1} \log \sigma(\omega_{2}) - \tfrac{1}{2} \eta_{2} \omega_{1}\omega_{2} +  \tfrac{1}{4} \pi i \omega_{2}.
\end{equation}

The next step requires an identity for the $\sigma$-function:

\begin{lemma}
\label{lemma-1}
The Weierstrass $\sigma$-function satisfies
\begin{equation}
\sigma^{2}(\omega_{1}+\omega_{2}) = e^{ 2 \eta_{2} \omega_{1}} \sigma^{2}(\omega_{1}) \sigma^{2}(\omega_{2}).
\label{V}
\end{equation}
\end{lemma}
\begin{proof}
Observe that \eqref{P2}
% implies that $\begin{displaystyle} \sigma(\omega_{1} + \omega_{2}) = 
%- e^{2 \eta_{2} \omega_{1}} \sigma(\omega_{1} - \omega_{2}), \end{displaystyle}$ while \eqref{P}
reveals
\begin{equation*} 
1  =  e_{1} - e_{3} 
 =  \wp(\omega_{1}) - \wp(\omega_{2}) 
 %= 
%  - \frac{\sigma(\omega_{1}-\omega_{2})}{\sigma(\omega_{1}) \sigma(\omega_{2})} \cdot \frac{\sigma(\omega_{1} + \omega_{2})}
%{\sigma(\omega_{1}) \sigma(\omega_{2})} 
 =  e^{- 2 \eta_{2} \omega_{1}} \frac{\sigma^{2}(\omega_{1}+\omega_{2})}{\sigma^{2}(\omega_{1}) \sigma^{2}(\omega_{2})}, 
\end{equation*}
\noindent
and this verifies the identity in \eqref{V}.
\end{proof}

The next result generates  an expression for $\log \sigma(\omega_{2})$.

\begin{lemma}
\label{lemma2}
The identity 
\begin{equation*}
%\label{T}
c^{2} = \wp(\omega_{3}) - \wp(\omega_{2}) = \frac{e^{2 \eta_{2} \omega_{2}}}{\sigma^{4}(\omega_{2})},
\end{equation*}
\noindent
holds. Therefore $\log \sigma(\omega_{2}) = \tfrac{1}{2}  \eta_{2} \omega_{2} - \tfrac{1}{2}  \log c$.
\end{lemma}
\begin{proof}
\noindent
Use \eqref{P} combined with \eqref{sigma-quasi} to produce 
\begin{equation*}
c^{2} =e_{2} - e_{3} = \wp(\omega_{3}) - \wp(\omega_{2}) = - \frac{\sigma(\omega_{1} + 2 \omega_{2}) \sigma(\omega_{1})}{\sigma^{2}(\omega_{3}) \sigma^{2}(\omega_{2})} 
= \frac{\sigma^{2}(\omega_{1}) e^{2 \eta_{2} \omega_{3}}}{\sigma^{2}(\omega_{3}) \sigma^{2}(\omega_{2})},
\end{equation*}
\noindent
and the result follows from \eqref{V}. 
\end{proof}

Lemma \ref{lemma-1}, lemma \ref{lemma2},  equation  \eqref{formula-j10} and the expressions for the half-periods \cite[page 114]{mckean-1997a}
\begin{equation*}
\omega_{1} =  \sqrt{e_{1} - e_{3}} \, \K(c) \quad \textnormal{and} \quad \omega_{2} = i  \sqrt{e_{1}-e_{3}} \,  \K(\sqrt{1-c^{2}})
\end{equation*}
\noindent
and the fact that $e_{1} - e_{3} = 1$, readily show that 
\begin{equation}
\label{new-J1a}
\J(c) = - \frac{\pi }{4} \K(\sqrt{1- c^{2}}) -  \frac{1}{2} \K(c) \log c.
\end{equation}
\noindent
Replacing  \eqref{new-J1a} back in  \eqref{transf-1} completes the proof.
\end{proof}

\smallskip 

The computation of $I(a,b)$ presented above and Theorem \ref{thm-rep1} yield an expression for $F$ in terms of complete elliptic integrals.

\begin{corollary}
\label{coro-F1}
The function $F$  defined in \eqref{F-def} is given by
\begin{equation*}
F(x) = \frac{1}{\pi \sqrt{1+16x}} 
\left[ \ln \left( \frac{x}{1+16x} \right) \K \left( \sqrt{\frac{16x}{1+16x}} \right) + \pi \K \left( \frac{1}{\sqrt{1+16x}} \right) \right].
\end{equation*}
\end{corollary}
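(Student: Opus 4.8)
The corollary follows at once by comparing the two expressions for $I(a,b)$ already obtained. The plan is to equate the representation of Theorem~\ref{thm-rep1},
\[
I(a,b)=\frac{1}{a}\ln\!\left(\frac{b}{2}\right)\K\!\left(\frac{b}{a}\right)-\frac{\pi}{4\sqrt{a^{2}-b^{2}}}\,F\!\left(\frac{b^{2}}{16(a^{2}-b^{2})}\right),
\]
with the closed form \eqref{main-1}, collect the coefficients of $\K(b/a)$, and solve the resulting linear relation for $F$. This gives
\[
F\!\left(\frac{b^{2}}{16(a^{2}-b^{2})}\right)=\frac{\sqrt{a^{2}-b^{2}}}{\pi a}\left[\,4\!\left(\ln\frac{b}{2}-\tfrac12\ln(ab)\right)\K\!\left(\frac{b}{a}\right)+\pi\,\K\!\left(\frac{\sqrt{a^{2}-b^{2}}}{a}\right)\right],
\]
and the only arithmetic needed is the one-line simplification $4\bigl(\ln\frac{b}{2}-\tfrac12\ln(ab)\bigr)=\ln\dfrac{b^{2}}{16a^{2}}$.

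It then remains to pass to the variable $x=\dfrac{b^{2}}{16(a^{2}-b^{2})}$. I would record the elementary identities
\[
1+16x=\frac{a^{2}}{a^{2}-b^{2}},\qquad\frac{16x}{1+16x}=\frac{b^{2}}{a^{2}},\qquad\frac{x}{1+16x}=\frac{b^{2}}{16a^{2}},
\]
from which $\dfrac{1}{\sqrt{1+16x}}=\dfrac{\sqrt{a^{2}-b^{2}}}{a}$ is the complementary modulus $k'$, $\sqrt{\dfrac{16x}{1+16x}}=\dfrac{b}{a}=k$, $\ln\dfrac{x}{1+16x}=\ln\dfrac{b^{2}}{16a^{2}}$, and $\dfrac{1}{\pi\sqrt{1+16x}}=\dfrac{\sqrt{a^{2}-b^{2}}}{\pi a}$. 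Substituting these into the displayed formula for $F$ converts it, term for term, into exactly the right-hand side of the corollary.

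There is no serious obstacle here; the argument is pure bookkeeping, and the only care required is in tracking the logarithms and the $\sqrt{a^{2}-b^{2}}$ prefactor through the substitution, and in checking that both moduli $b/a$ and $\sqrt{a^{2}-b^{2}}/a$ lie in $(0,1)$, so that (in contrast to Section~\ref{sec-analytic-1}) no imaginary-modulus transformation of $\K$ is invoked. One should also note the domain of validity: as $(a,b)$ ranges over pairs with $0<b/a<1/\sqrt2$, the value $x=\dfrac{b^{2}}{16(a^{2}-b^{2})}$ sweeps out all of $(0,\tfrac1{16})$, which is precisely the intersection of the disk of convergence of the series \eqref{F-def} with the positive real axis; thus the identity holds wherever $F$ is given by its defining series, and the right-hand side, being analytic near $(0,\tfrac1{16})$, supplies the analytic continuation of $F$ past $|x|<\tfrac1{16}$.
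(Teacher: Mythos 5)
Your proposal is correct and follows exactly the paper's route: the corollary is obtained by equating the representation of Theorem~\ref{thm-rep1} with the closed form \eqref{main-1}, solving the linear relation for $F$, and rewriting in the variable $x=b^{2}/(16(a^{2}-b^{2}))$. Your bookkeeping (the simplification to $\ln\frac{b^{2}}{16a^{2}}$ and the substitution identities) checks out, and the remark on the range $x\in(0,\tfrac{1}{16})$ is a welcome precision the paper leaves implicit.
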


\section{A new  expression for the function $F$}
\label{sec-F}

The function $F$, defined in \eqref{F-def}, appeared in the first computation of $I(a,b)$ given in Theorem \ref{thm-rep1}.  This section
 presents a different approach to this function and 
establishes a connection with the integral $\J(c)$ defined in \eqref{J-def-0}. 

\begin{lemma}
\label{lem-ab}
Let $B_{n}$ be a sequence and define $A_{n}$ by $A_{n} = 4^{2n}(2n-1)B_{n}$. Then 
\begin{multline}
\label{int-mess1}
\sum_{n=0}^{\infty} (-1)^{n} B_{n} \binom{2n}{n}^{2} H_{n} = 
\frac{2}{\pi} \int_{0}^{1} \frac{\log(1-y^{2})}{\sqrt{1-y^{2}}} \left(  \sum_{n=0}^{\infty} A_{n} \binom{\tfrac{1}{2}}{n} y^{2n} \right) \, dy  \\
- 2 \log 2 \sum_{n=0}^{\infty} (-1)^{n} B_{n} \binom{2n}{n}^{2};
\end{multline}
\noindent
where $H_{n}$ are the harmonic numbers. 
\end{lemma}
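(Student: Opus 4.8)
The plan is to recognize the left-hand side as $C_1'(c,0)$ (up to sign) from the computation in Lemma~\ref{lemma-Jc1}, and to re-derive it by a completely different route: starting from the integral representation of $\mathbf{A}(c,r)$ and differentiating under the integral sign with respect to $r$ at $r=0$, rather than first passing to the hypergeometric/Euler-transform form. Concretely, recall $\mathbf{A}(c,r) = \int_0^1 s^{r-1/2}(1-s)^{-1/2}(1-c^2 s)^{-1/2}\,ds$, so that $\tfrac{d}{dr}\mathbf{A}(c,r)|_{r=0} = \int_0^1 s^{-1/2}(1-s)^{-1/2}(1-c^2 s)^{-1/2}\log s\,ds$. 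Substituting $s = y^2$ (or rather, recognizing the structure after the Euler transformation so that the modulus becomes $c^2/(c^2-1)$, which is where the parameter $t = c^2/(1-c^2)$ and the argument $x = c^2/(16(1-c^2))$ of $F$ come from) should turn the $\log s$ into a $\log(1-y^2)$ term plus elementary pieces, with the residual $\log 2$ contribution producing the second sum on the right. The key algebraic bookkeeping is the passage $B_n \mapsto A_n = 4^{2n}(2n-1)B_n$: this is exactly the inverse of the identity \eqref{binom-form1}, namely $\binom{1/2}{n} = \tfrac{(-1)^{n-1}}{2^{2n}(2n-1)}\binom{2n}{n}$, which converts $\sum (-1)^n B_n \binom{2n}{n}^2 (\cdots)$ into $\sum A_n \binom{1/2}{n}\binom{2n}{n}(\cdots)$ and then, via the binomial series $(1-y^2)^{1/2} = \sum \binom{1/2}{n}(-1)^n y^{2n}$ used in reverse for one of the two $\binom{2n}{n}$ factors, into the generating function $\sum A_n \binom{1/2}{n} y^{2n}$ appearing inside the integral.

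First I would write $\binom{2n}{n} = \tfrac{4^n}{\pi}\int_0^1 \tfrac{y^{2n}\,dy}{\sqrt{1-y^2}}\cdot(\text{const})$ — more precisely use the classical $\int_0^1 \tfrac{y^{2n}}{\sqrt{1-y^2}}\,dy = \tfrac{\pi}{2}\cdot\tfrac{1}{4^n}\binom{2n}{n}$ — to represent one copy of $\binom{2n}{n}$ as an integral. This turns $\sum_{n}(-1)^n B_n \binom{2n}{n}^2 H_n$ into $\tfrac{2}{\pi}\int_0^1 \tfrac{1}{\sqrt{1-y^2}}\sum_n (-1)^n B_n \binom{2n}{n} 4^n H_n y^{2n}\,dy$. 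Next I would handle the harmonic number $H_n$: the standard device is $H_n = \int_0^1 \tfrac{1-u^n}{1-u}\,du$, or better, one uses the generating-function identity that ties $\sum a_n H_n x^n$ to a logarithmic derivative; here the cleanest route is to note that the $H_n$ arose in Lemma~\ref{lemma-Jc1} precisely from differentiating $1/(r+1)_\ell$, and the $\log(1-y^2)$ kernel on the right-hand side is the generating function manifestation of exactly this $H_n$, since $-\log(1-y^2) = \sum_{k\ge 1} y^{2k}/k$ and convolution with it produces partial sums of $1/k$. So the identity to establish reduces to: the coefficient of $y^{2n}$ in $\log(1-y^2)\cdot\bigl(\sum_m A_m\binom{1/2}{m} y^{2m}\bigr)$, after the $1/\sqrt{1-y^2}$ integration, reproduces $(-1)^n B_n \binom{2n}{n}^2 H_n$ plus the $\log 2$ correction.

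The main obstacle, as I see it, is keeping the two $\binom{2n}{n}$ factors straight while simultaneously juggling the $\binom{1/2}{n}$–$\binom{2n}{n}$ conversion and the harmonic-number convolution: one factor of $\binom{2n}{n}$ becomes the $\int_0^1 y^{2n}(1-y^2)^{-1/2}\,dy$ integral, the other is rewritten via \eqref{binom-form1} as $4^{2n}(2n-1)\binom{1/2}{n}$ and absorbed into $A_n$, and the $\log(1-y^2)$ must be shown to supply exactly $H_n$ (with the $-2\log 2\sum(-1)^n B_n\binom{2n}{n}^2$ mopping up the boundary term $\log(1-y^2)\to$ constant as the convolution is evaluated, since $\sum_{k=1}^\infty \tfrac{1}{k} - H_n$ formally "costs" a divergent piece that the $y$-integration regularizes into $2\log 2$). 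A safe way to avoid sign and constant errors is to verify the claimed identity directly against Lemma~\ref{lemma-Jc1}: both sides, when specialized via $B_n = (-1)^n/(something)$ matching $F(x)$ with $x = c^2/(16(1-c^2))$, must reproduce $\mathbf{J}(c) = -\log 2\,\mathbf{K}(c) - \tfrac{\pi}{4\sqrt{1-c^2}}F(\cdots)$; checking the $n=0,1$ coefficients by hand pins down all constants, and then the general-$n$ identity follows from the generating-function computation. I expect the write-up to proceed: (1) state the two ingredient integrals $\int_0^1 y^{2n}(1-y^2)^{-1/2}dy$ and the $\log 2$ value of a companion integral; (2) substitute and interchange sum and integral (justified by absolute convergence for $|B_n|$ with the relevant radius, as the paper's $F$ converges for $|x|<1/16$); (3) perform the $\binom{1/2}{n}$ conversion to reach $A_n$; (4) identify the $\log(1-y^2)$ piece as the harmonic-number generator and collect the $\log 2$ remainder.
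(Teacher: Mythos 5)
There is a genuine gap. The entire content of the paper's proof is a single closed-form evaluation, namely
\begin{equation*}
\int_{0}^{1} \frac{y^{2n} \log ( 1 - y^{2})}{\sqrt{1-y^{2}}}\, dy \;=\; - \frac{\pi}{2^{2n+1}} \binom{2n}{n} \bigl[ H_{n} + 2 \log 2 \bigr],
\end{equation*}
which, applied term by term to the integral on the right of \eqref{int-mess1} and combined with \eqref{binom-form1} (so that $A_n\binom{1/2}{n} = (-1)^{n-1}4^{n}B_n\binom{2n}{n}$), immediately produces the left-hand side plus the $2\log 2$ correction. Your proposal never states or derives this identity. You correctly set up the Wallis integral $\int_0^1 y^{2n}(1-y^2)^{-1/2}dy = \tfrac{\pi}{2}4^{-n}\binom{2n}{n}$ for one binomial factor and you correctly identify the role of \eqref{binom-form1} in the passage $B_n \mapsto A_n$, but the crucial trade of the factor $H_n$ for the kernel $\log(1-y^2)$ (with the $2\log 2$ remainder) is left to a heuristic: you assert that a formally divergent piece is "regularized into $2\log 2$" by the $y$-integration, and that the constants can then be "pinned down" by checking $n=0,1$ and matching against Lemma \ref{lemma-Jc1}. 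Neither device proves the identity: checking finitely many coefficients cannot establish it for all $n$, and matching the single analytic specialization $B_n = x^n$ cannot establish a statement asserted for an \emph{arbitrary} sequence $B_n$ — the lemma is really a per-$n$ identity, so the per-$n$ integral evaluation is exactly what must be supplied.

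Moreover, the mechanism you sketch for producing $H_n$ is not the right one. The convolution of $-\log(1-y^2)=\sum_{k\ge1}y^{2k}/k$ with $(1-y^2)^{-1/2}=\sum_m 4^{-m}\binom{2m}{m}y^{2m}$ gives coefficients $\sum_{k=1}^{n}\tfrac{1}{k}\,4^{-(n-k)}\binom{2(n-k)}{n-k}$, which is \emph{not} $4^{-n}\binom{2n}{n}H_n$ (no finite rational sum could also produce the $2\log 2$), so "the $\log(1-y^2)$ kernel is the generating function of $H_n$" does not close the argument without a further nontrivial identity of the type \eqref{stefan-1}. The clean route — and the natural way to fill your gap — is to differentiate the Beta integral $\int_0^1 y^{2n}(1-y^2)^{s-1/2}dy=\tfrac12 B(n+\tfrac12,s+\tfrac12)$ in $s$ at $s=0$, which yields the factor $\psi(\tfrac12)-\psi(n+1) = -H_n-2\log 2$ and hence the displayed evaluation, explaining the $2\log 2$ without any regularization talk. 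Finally, the opening detour identifying the left-hand side with $C_1'(c,0)$ is off-target for the same reason as above: the lemma concerns a general sequence $B_n$, not the specific series defining $F$.
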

\begin{proof}
Use the value 
\begin{equation*}
\int_{0}^{1} \frac{y^{2n} \log ( 1 - y^{2})}{\sqrt{1-y^{2}}}\, dy = - \frac{\pi}{2^{2n+1}} \binom{2n}{n} \left[ H_{n} + 2 \log 2 \right],
\end{equation*}
\noindent
to compute the integral in \eqref{int-mess1},  and \eqref{binom-form1} to simplify the result.
%\begin{equation*}
%\binom{\tfrac{1}{2}}{n}  = \frac{(-1)^{n-1}}{2^{2n} (2n-1)} \binom{2n}{n}
%\end{equation*}
%\noindent
%to simplify the result. 
\end{proof}

The expression for $F$ is established next. 

\begin{theorem}
\label{thm-formF1}
The function $F$ defined in \eqref{F-def} is given by
\begin{equation}
\label{form-F10}
F(x) = - \frac{4}{\pi \sqrt{1+16 x}} 
\left[ \J \left( \frac{16x}{1+16x} \right) +\log 2 \, \K \left( \sqrt{\frac{16x}{1+16x}} \right) \right].
\end{equation}
\end{theorem}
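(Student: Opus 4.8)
The plan is to run the sequence $B_n=x^n$ through Lemma \ref{lem-ab}. With this choice the left side of \eqref{int-mess1} is exactly $F(x)$, and $A_n=4^{2n}(2n-1)x^n=(2n-1)(16x)^n$. So the entire proof reduces to closed-form evaluations of three ingredients: the inner series $G_x(y):=\sum_{n\ge 0}A_n\binom{1/2}{n}y^{2n}$, the constant $\sum_{n\ge 0}(-1)^n x^n\binom{2n}{n}^2$, and the resulting integral $\int_0^1\frac{\log(1-y^2)}{\sqrt{1-y^2}}\,G_x(y)\,dy$, followed by assembly.

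First I would evaluate the inner series. By \eqref{binom-form1} in the form $(2n-1)\binom{1/2}{n}=\frac{(-1)^{n-1}}{4^{n}}\binom{2n}{n}$, one gets $G_x(y)=-\sum_{n\ge 0}\binom{2n}{n}(-4xy^2)^n=-(1+16xy^2)^{-1/2}$ from the generating function $\sum_n\binom{2n}{n}z^n=(1-4z)^{-1/2}$ (all sums converge for $|x|$ small, as in \eqref{F-def}). Hence the integral term becomes $-\frac{2}{\pi}\int_0^1\frac{\log(1-y^2)\,dy}{\sqrt{(1-y^2)(1+16xy^2)}}$. Next, writing $\binom{2n}{n}^2=16^n(1/2)_n^2/n!^2$ and using the hypergeometric series for $\K$, the constant equals $\frac{2}{\pi}\K(4i\sqrt{x})$, and the imaginary-modulus rule \eqref{K-imag1} with $t=4\sqrt{x}$ converts this to $\frac{2}{\pi\sqrt{1+16x}}\K\!\left(\sqrt{\tfrac{16x}{1+16x}}\right)$. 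At this stage \eqref{int-mess1} reads
\[
F(x)=-\frac{2}{\pi}\int_{0}^{1}\frac{\log(1-y^{2})\,dy}{\sqrt{(1-y^{2})(1+16xy^{2})}}-\frac{4\log 2}{\pi\sqrt{1+16x}}\,\K\!\left(\sqrt{\tfrac{16x}{1+16x}}\right),
\]
so it remains to show the integral equals $\tfrac{2}{\sqrt{1+16x}}\J\!\left(\sqrt{\tfrac{16x}{1+16x}}\right)$.

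That identification is the only genuine piece of work, and it comes from the substitution $t=\sqrt{1-y^2}$ in the definition \eqref{J-def-0}: it sends $\int_0^1\frac{\ln t\,dt}{\sqrt{(1-t^2)(1-c^2t^2)}}$ to $\tfrac12\int_0^1\frac{\ln(1-y^2)\,dy}{\sqrt{(1-y^2)\big((1-c^2)+c^2y^2\big)}}$, and pulling $(1-c^2)$ out of the last bracket rewrites this as $\frac{1}{2\sqrt{1-c^2}}\int_0^1\frac{\ln(1-y^2)\,dy}{\sqrt{(1-y^2)(1+\frac{c^2}{1-c^2}y^2)}}$. Choosing $c=\sqrt{16x/(1+16x)}$ gives $\frac{c^2}{1-c^2}=16x$ and $\sqrt{1-c^2}=(1+16x)^{-1/2}$, which is exactly the needed relation; substituting back and factoring out $-\frac{4}{\pi\sqrt{1+16x}}$ yields \eqref{form-F10}. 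The same formula also drops out directly from Lemma \ref{lemma-Jc1} by solving $x=c^2/(16(1-c^2))$ for $c$, which serves as a consistency check; the route through Lemma \ref{lem-ab} is the independent derivation promised in this section. The only care required is bookkeeping of the several square roots (and staying in the disc $|x|$ small where the series converge) and justifying the termwise integration in Lemma \ref{lem-ab}, which is routine dominated convergence on $[0,1]$.
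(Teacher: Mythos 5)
Your proposal is correct and follows essentially the same route as the paper's proof: taking $B_n=x^n$ in Lemma \ref{lem-ab}, summing the two classical central-binomial series, applying the imaginary-modulus rule \eqref{K-imag1}, and then using the substitution $t=\sqrt{1-y^2}$ (the paper's $u=1-y^2$) to recognize the remaining integral as $\J$ evaluated at $c=\sqrt{16x/(1+16x)}$. Note that your computation yields $\J\bigl(\sqrt{16x/(1+16x)}\bigr)$, which, given the definition \eqref{J-def-0}, is the correct reading of \eqref{form-F10}; the missing square root in the displayed argument of $\J$ there is a typo in the statement, not a gap in your argument.
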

\begin{proof}
Let $B_{n} = x^{n}$ in Lemma \ref{lem-ab} and use the classical series 
\begin{equation*}
\sum_{n=0}^{\infty} \binom{2n}{n} u^{n} =  (1 - 4u)^{-1/2} \quad \textnormal{and} \quad 
\sum_{n=0}^{\infty} \binom{2n}{n}^{2} u^{n} = \frac{2}{\pi} \K(\sqrt{16 u})
\end{equation*}
\noindent
to obtain 
\begin{equation}
\label{form-F1}
F(x) = - \frac{2}{\pi} \int_{0}^{1} \frac{\log(1- y^{2}) \, dy}{\sqrt{1-y^{2}} \sqrt{1 + 16 x y^{2}}} 
- \frac{4 \log 2}{\pi} \frac{1}{\sqrt{1+ 16 x }} \K \left( \sqrt{ \frac{16 x }{1+ 16 x }} \right),
\end{equation}
\noindent
using the transformation \eqref{K-imag1} for the complete elliptic integral.
%\begin{equation*}
%\K(iu) = \frac{1}{\sqrt{1+u^{2}}} \K \left( \frac{u}{\sqrt{1+ u^{2}}} \right).
%\end{equation*}
%\noindent
The change of variables $u = 1-y^{2}$ converts \eqref{form-F1} into \eqref{form-F10} and produces  the result.
%\begin{equation}
%F(x) = - \frac{4}{\pi} \frac{1}{\sqrt{1+16x}}  \J \left( \frac{16 x }{1+ 16x} \right) - \frac{4 \log 2}{\pi} \frac{1}{\sqrt{1+ 16 x }} \K \left( \sqrt{ \frac{16 x }{1+ 16 tx}} \right),
%\end{equation}
%\noindent
%as claimed.
\end{proof}

\section{An automatic proof the last identity for  $F$}
\label{sec-automatic}

In this final section we show how the holonomic summation techniques  \cite{koutschan-2013a} can be employed 
to prove the closed-form evaluation of $F(x)$  stated in Corollary \ref{coro-F1}, restated here for the convenience of the reader:

\begin{theorem}
\label{thm-christoph}
The function $F$ in \eqref{F-def} is given by
\begin{eqnarray}
 \label{eq:idF}
% \quad \quad  \quad F(x)& := & \sum_{n=0}^\infty (-1)^n \binom{2n}{n}^2 H_nx^n  \\
\quad \quad F(x)  & = &  \frac{1}{\pi\sqrt{1+16x}}\left[\ln\biggl(\frac{x}{1+16x}\biggr)
    \K\biggl(\sqrt{\frac{16x}{1+16x}}\biggr)
    +\pi\K\biggl(\frac{1}{\sqrt{1+16x}}\biggr)\right]. 
\end{eqnarray}
\end{theorem}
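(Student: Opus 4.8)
## Proof Proposal

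The plan is to prove \eqref{eq:idF} by the holonomic (creative telescoping) method. The key observation is that every function appearing on both sides of the claimed identity is $D$-finite (holonomic) as a function of $x$, so the identity reduces to a finite check: verify that the two sides satisfy a common linear ODE with polynomial coefficients, and then match sufficiently many initial terms of their power series expansions at $x=0$.

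First I would establish, via the holonomic closure properties implemented in the package of \cite{koutschan-2013a}, that the left-hand side $F(x)=\sum_{n\ge 0}(-1)^n\binom{2n}{n}^2 H_n x^n$ is holonomic and compute an explicit annihilating operator. The summand $(-1)^n\binom{2n}{n}^2 H_n x^n$ is a holonomic sequence--term in $n$ and $x$ jointly (products and the extra summation variable hidden in $H_n=\sum_{k\le n}1/k$ are all holonomic operations), so creative telescoping produces a linear recurrence in $n$ for the coefficients and, dually, a linear ODE in $x$ for $F$. The paper has already advertised that $F$ satisfies a remarkable fourth-order differential equation; I would take that operator, call it $\mathcal{L}$, as the output of this step. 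Next I would show the right-hand side $G(x)$, built from $(1+16x)^{-1/2}$, $\ln(x/(1+16x))$, $\K(\sqrt{16x/(1+16x)})$ and $\K(1/\sqrt{1+16x})$, is annihilated by the \emph{same} $\mathcal{L}$. Each ingredient is holonomic: $\K$ of an algebraic argument is holonomic because $\K$ satisfies the hypergeometric (Picard--Fuchs) equation and holonomicity is preserved under algebraic substitution; the logarithm contributes a simple first-order factor; and products and sums of holonomic functions are holonomic. So I would compute an annihilating operator for $G$ by closure operations and then verify, by operator (left-)division with polynomial-coefficient arithmetic, that $\mathcal{L}$ lies in the annihilating ideal of $G$--equivalently that $\mathcal{L}G=0$.

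Once both $F$ and $G$ are known to be annihilated by $\mathcal{L}$, a fourth-order operator, the difference $F-G$ lies in the (at most) four-dimensional solution space of $\mathcal{L}$. It then suffices to check that $F-G$ vanishes to high enough order at a point (say $x=0$, or at a nearby generic point if $x=0$ is singular for $\mathcal{L}$--note the $\ln x$ makes $x=0$ a regular singular point, so some care with the local exponents and the choice of basis of formal solutions is needed). Concretely I would expand both sides as (possibly logarithmic) Frobenius series, match the leading behaviours, and compare enough coefficients: for a fourth-order operator, agreement of the appropriate initial data along each local solution branch forces $F=G$ identically in a neighbourhood, hence everywhere by analytic continuation on the common domain $|x|<\tfrac{1}{16}$. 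One can cross-check consistency using the analytic identity already proved: Corollary \ref{coro-F1} and Theorem \ref{thm-formF1} together say $G(x) = -\tfrac{4}{\pi\sqrt{1+16x}}\bigl[\J(\tfrac{16x}{1+16x}) + \log 2\,\K(\sqrt{\tfrac{16x}{1+16x}})\bigr]$, which gives an independent handle on the initial coefficients.

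The main obstacle I anticipate is \emph{not} the existence of the common operator--that is essentially guaranteed by holonomicity and is handled by the algorithms--but rather the practical certification: producing $\mathcal{L}$ in a form small enough that the division check $\mathcal{L}\in\mathrm{Ann}(G)$ is feasible, and correctly handling the logarithmic singularity at $x=0$ so that the finite initial-value comparison is airtight (one must compare along the $\log$-bearing solution too, not merely the analytic part). A secondary subtlety is that the creative-telescoping certificate for $F$ must genuinely account for the harmonic-number factor; this is the step where one invokes that $H_n$ satisfies $H_{n+1}-H_n = 1/(n+1)$, making the whole summand holonomic in $n$, so that the telescoping relation closes. With those points dispatched, the proof is the standard "guess-and-prove then check finitely many terms" closing argument.
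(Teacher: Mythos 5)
Your proposal follows essentially the same route as the paper: creative telescoping (via the holonomic machinery, with the harmonic numbers handled through $H_{n+1}-H_n=1/(n+1)$) yields the fourth-order ODE for $F$, closure properties show the right-hand side satisfies the same operator, and the identity is then settled by comparing initial series coefficients. Your extra attention to the logarithmic singular point $x=0$ when matching initial data is a sound refinement of the paper's brief ``compare four initial values'' step, but the argument is otherwise the same.
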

%\noindent
%Replacing this formula in  Theorem \ref{thm-rep1} gives a proof of the original 
%of the main identity \eqref{entry-4.242.4}.

\begin{proof}
Let $f_n(x)$ denote the expression inside the sum, i.e.,
\begin{equation*}
  f_n(x) = (-1)^n \binom{2n}{n}^2 H_nx^n.
\end{equation*}
Note that $f_n(x)$ is not hypergeometric in~$n$, due to the presence  of
harmonic numbers~$H_n$, and therefore the original Almkvist--Zeilberger
algorithm~\cite{almkvist-1990a} is not applicable. Instead it requires the corresponding 
 generalization to arbitrary holonomic functions, as implemented in the
\texttt{HolonomicFunctions}  package~\cite{koutschan-2010a}, which delivers the following
telescopic relation:
\begin{multline}
  x^2 (16 x+1)^2 f_n^{(4)}(x)
  +5 x (32 x+1) (16 x+1)f_n^{(3)}(x) \\
  +4 (1568 x^2+98 x+1) f_n''(x)
  {}+108 (32 x+1) f_n'(x)
  +144 f_n(x)  \\ = g_n(x) - g_{n+1}(x),
   \label{eq:tele}
\end{multline}
where $f_{n}^{(i)}$ denotes the $i^{th}$-derivative and 
\begin{equation*}
  g_n(x) = \frac{n}{x^2} \Bigl(\bigl((n-1) n^2+4 (2 n+1)^3 x\bigr) f_n(x)
  +n (n+1)^3 f_{n+1}(x)\Bigr).
\end{equation*}
The correctness of \eqref{eq:tele} can be established by routine calculations:
divide both sides by $\binom{2n+1}{n+1}^2(-x)^n$ and observe that it reduces to
\begin{equation*}
  (n+2) H_{n+2}-(2n+3) H_{n+1}+(n+1) H_n = 0,
\end{equation*}
which is indeed a valid relation for harmonic numbers. This can be established by writing the sums 
for the harmonic numbers and splitting in the manner
\begin{equation}
H_{n+2} = H_{n}  + \frac{1}{n+1} + \frac{1}{n+2} \textnormal{ and } 
H_{n+1} =  H_{n} + \frac{1}{n+1}.
\end{equation}

Summing the right-hand side of \eqref{eq:tele} over $n=0,\, 1, \dots$ gives,
for $|x|<\frac{1}{16}$,
\begin{equation*}
  g_0(x) - \lim_{n\to\infty}g_n(x) = 0,
\end{equation*}
while summing the left-hand side of \eqref{eq:tele} yields the desired fourth-order
differential equation for~$F(x)$:
\begin{multline}
  x^2 (16 x+1)^2 F^{(4)}(x)
  +5 x (32 x+1) (16 x+1)F^{(3)}(x) \\
  {}+4 (1568 x^2+98 x+1) F''(x)
  +108 (32 x+1) F'(x)
  +144 F(x) = 0.
   \label{eq:ODE}
\end{multline}

To derive a differential equation for the right-hand side of \eqref{eq:idF}, one
transforms the standard differential equation for the complete elliptic integral 
(see \cite[page 68]{mckean-1997a}):
\begin{equation*}
  x(x^2-1) \K''(x)+ (3 x^2-1) \K'(x) + x\,\K(x) = 0
\end{equation*}
into
\begin{equation*}
  x(16 x+1)^2 y''(x)+(16 x+1)^2 y'(x)-4 y(x) = 0,
\end{equation*}
which is satisfied by both 
\begin{equation*}
  y(x) = \K\biggl(\sqrt{\frac{16x}{1+16x}}\biggr)
  \quad\text{and}\quad
  y(x) = \K\biggl(\frac{1}{\sqrt{1+16x}}\biggr).
\end{equation*}
Combining it with the differential equation
\begin{equation*}
  x (16 x+1)^2 y''(x)+(48 x+1) (16 x+1) y'(x)+8 (24 x+1) y(x) = 0,
\end{equation*}
satisfied by
\begin{equation*}
  y(x) = \frac{1}{\sqrt{1+16x}}\ln\biggl(\frac{x}{1+16x}\biggr),
\end{equation*}
yields exactly the same differential equation as in \eqref{eq:ODE}. These kinds
of closure properties are executed algorithmically and automatically by the
\texttt{Annihilator} command in~\cite{koutschan-2010b}.

Since both sides of \eqref{eq:idF} satisfy the same fourth-order ODE, it
suffices to compare four initial values to establish equality. Using the
Taylor series
\begin{equation*}
  \K(x) = \pi\Bigl(\frac{1}{2} + \frac{1}{8}x^2 + \frac{9}{128}x^4 + \dots\Bigr)
\end{equation*}
one computes the series expansion 
\begin{equation*}
  -4x + 54x^2 - \frac{2200}{3} x^3 + \frac{30625}{3} x^4 + \dots
\end{equation*}
for the right-hand side of \eqref{eq:idF}. Truncating the sum on the left-hand
side produces exactly the same coefficients, thereby completing the proof.
\end{proof}

Replacing \eqref{eq:idF} in  Theorem \ref{thm-rep1} gives a proof of the original identity \eqref{entry-4.242.4}. The same procedure
 applies to $I(a,b)$ directly, at least in principle.
The result is a system of PDEs in~$a$ and~$b$, but it turns out that comparing
the initial values is more delicate.

%\bibliography{/Users/vmh/Dropbox/AllRef/official1.bib}
%\bibliographystyle{plain}
%\end{document}

\bibliography{/Users/vmh/Dropbox/AllRef/official1.bib}

\begin{thebibliography}{10}

\bibitem{ablowitz-1977b}
M.~J. Ablowitz and H.~Segur.
\newblock Asymptotics solutions of the {K}orteweg-de~{V}ries equation.
\newblock {\em Stud.~Appl.~Math.}, 57:13--44, 1977.

\bibitem{almkvist-1990a}
G.~Almkvist and D.~Zeilberger.
\newblock The method of differentiating under the integral sign.
\newblock {\em Jour. Symb. Comp.}, 10:571--591, 1990.

\bibitem{boettner-2011a}
S.~B\"{o}ettner and V.~Moll.
\newblock The integrals in {G}radshteyn and {R}yzhik. {P}art 16: {C}omplete
  elliptic integrals.
\newblock {\em Scientia}, 20:45--59, 2011.

\bibitem{boussinesq-1871a}
J.~Boussinesq.
\newblock Th\'{e}orie de l'intumescence liquid appel\'{e}e onde solitaire ou de
  translation, se propageant dans un canal rectangulaire.
\newblock {\em C. R. Acad. Sci. Paris}, 72:755--759, 1871.

\bibitem{gr-32}
D.~Chen, T.~Dunaisky, Victor~H. Moll, A.~R. McCurdy, C.~Nguyen, and V.~Sharma.
\newblock The integrals in {G}radshteyn and {R}yzhik. {P}art 32: {P}owers of
  trigonometric functions.
\newblock {\em Scientia}, 32:71--98, 2022.

\bibitem{deift-1993c}
P.~{A}. Deift, A.~R. Its, and X.~Zhou.
\newblock Long-time asymptotics for integrable nonlinear wave equations.
\newblock In {\em Important developments in soliton theory}, Nonlinear
  {D}ynamics, pages 181--204. Springer-Verlag, Berlin-Heidelberg, 1993.

\bibitem{deift-1994a}
P.~A. Deift, S.~Venakides, and X.~Zhou.
\newblock The collisionless shock region for the long-time behavior of solution
  of the {K}d{V} equation.
\newblock {\em Comm. {P}ure {A}ppl. {M}ath.}, 47:199--206, 1994.

\bibitem{deift-1992b}
P.~A. Deift and {X}. Zhou.
\newblock A steepest descent method for oscillary {R}iemann-{H}ilbert problems.
\newblock {\em Bull.~{A}mer.~{M}ath.~{S}oc.}, 26:119--123, 1992.

\bibitem{deift-1993a}
P.~A. Deift and {X}. Zhou.
\newblock A steepest descent method for oscillary {R}iemann-{H}ilbert problems.
  asymptotics for the m{KDV} equation.
\newblock {\em Ann.~of~{M}ath.}, 137:295--368, 1993.

\bibitem{deift-1994d}
P.~A. Deift and X.~Zhou.
\newblock Long-time behavior of the non-focusing nonlinear {S}chr\"{o}dinger
  equation-a case study.
\newblock {\em Lecture in {M}ath.~{S}ciences,~{U}niversity of {T}okyo}, 5,
  1994.

\bibitem{flaschka-1974b}
H.~Flaschka.
\newblock On the {T}oda lattice {II}. {I}nverse-scattering solution.
\newblock {\em Prog. {T}heoret. {P}hys.}, 51:703--716, 1974.

\bibitem{flaschka-1974a}
H.~Flaschka.
\newblock The {T}oda lattice {I}. {E}xistence of integrals.
\newblock {\em Phys. {R}ev.}, B 9:1924--1925, 1974.

\bibitem{gardner-1967b}
C.~S. Gardner, J.~M. Greene, M.~D. Kruskal, and R.~M. Miura.
\newblock Method for solving the {K}orteweg-de {V}ries equation.
\newblock {\em Phys. Rev. Letters}, 19:1095--1097, 1967.

\bibitem{gradshteyn-2015a}
I.~S. Gradshteyn and I.~M. Ryzhik.
\newblock {\em Table of {I}ntegrals, {S}eries, and {P}roducts}.
\newblock Edited by D. Zwillinger and V. Moll. Academic Press, New York, 8th
  edition, 2015.

\bibitem{korteweg-1895a}
D.~J. Korteweg and G.~de~Vries.
\newblock On the change of form of long waves advancing in a rectangular canal
  and on a new type of long stationary waves.
\newblock {\em Philos. Mag.}, 39:422--443, 1895.

\bibitem{koutschan-2010a}
C.~Koutschan.
\newblock HolonomicFunctions (user's guide).
\newblock Technical Report 10-01, RISC Report Series, Johannes Kepler
  University Linz, 2010.
\newblock
  http:/$\!$/www.risc.uni-linz.ac.at/\linebreak[0]research/\linebreak[0]combinat/\linebreak[0]software/\linebreak[0]HolonomicFunctions/.

\bibitem{koutschan-2010b}
C.~Koutschan.
\newblock {HolonomicFunctions (User's Guide)}.
\newblock Technical Report 10-01, RISC Report Series, University of Linz,
  Austria, January 2010.

\bibitem{koutschan-2013a}
C.~Koutschan.
\newblock Creative telescoping for {H}olonomic {F}unctions.
\newblock In {\em Computer {A}lgebra in {Q}uantum {F}ield
  {T}heory:~{I}ntegration, {S}ummation and {S}pecial {F}unctions}, Texts \&
  {M}onographs in {S}ymbolic {C}omputation, pages 171--194, Wien, 2013.
  Springer.

\bibitem{lax-1968a}
P.~D. Lax.
\newblock Integrals of non-linear equations and solitary waves.
\newblock {\em Comm. {P}ure {A}ppl. {M}ath.}, 21:467--490, 1968.

\bibitem{manakov-1974b}
S.~V. Manakov.
\newblock Complete integrability and stochasticity for discrete dynamical
  systems.
\newblock {\em Journal~{E}.~{T}heor.~{P}hys.}, 40:269--274, 1974.

\bibitem{manakov-1974c}
S.~V. Manakov.
\newblock Integration of discrete dynamical systems.
\newblock {\em Z.~{E}ksper.~{T}eoret.~{F}iz.}, 67:543, 1974.

\bibitem{mckean-1997a}
H.~P. McKean and V.~Moll.
\newblock {\em Elliptic {C}urves: {F}unction {T}heory, {G}eometry,
  {A}rithmetic}.
\newblock Cambridge University Press, New York, 1997.

\bibitem{moll-2007a}
V.~Moll.
\newblock The integrals in {G}radshteyn and {R}yzhik. {P}art 1: {A} family of
  logarithmic integrals.
\newblock {\em Scientia}, 14:1--6, 2007.

\bibitem{petkovsek-1996a}
M.~Petkov\v{s}ek, H.~Wilf, and D.~Zeilberger.
\newblock {\em A=B}.
\newblock A.~K.~Peters, 1st. edition, 1996.

\bibitem{whittaker-2020a}
E.~T. Whittaker and G.~N. Watson.
\newblock {\em A {C}ourse of {M}odern {A}nalysis}.
\newblock Cambridge University Press, Prepared for publication by
  {V}ictor~{H}.~{M}oll, $2020$.

\bibitem{zabusky-1965a}
N.~J. Zabusky and M.~D. Kruskal.
\newblock Interaction of solitons in a collisionless plasma and the recurrence
  of initial states.
\newblock {\em Phys.~Rev.~Letters}, 15:240--243, 1965.

\bibitem{zakharov-1976a}
V.~E. Zakharov and {S}.~{V}. {M}anakov.
\newblock Asymptotic behavior of nonlinear wave systems integrated by the
  inverse method.
\newblock {\em Sov.~{P}hys.-{JETP}}, 44:106--112, 1976.

\bibitem{zakharov-1973b}
V.~E. Zakharov and {A}.~{B}. {S}habat.
\newblock Interaction between solitons in a stable medium.
\newblock {\em Sov.~{P}hys.-{JETP}}, 37:823--828, 1973.

\end{thebibliography}
\bibliographystyle{plain}
\end{document}